\newtheorem{thm}[subsection]{Theorem}
\newtheorem{lemma}[subsection]{Lemma}
\newtheorem{pro}[subsection]{Proposition}
\newtheorem{cor}[subsection]{Corollary}
\newtheorem{rk}[subsection]{Remark}
\numberwithin{equation}{section} \setcounter{tocdepth}{1}
\newcommand{\bea}{\begin{eqnarray}}
\newcommand{\eea}{\end{eqnarray}}
\newcommand{\Z}{\mathbb{Z}}
\newcommand{\Q}{\mathbb{Q}}
\def\bn{{\mathbb N}}
\def\bq{{\mathbb Q}}
\def\bz{{\mathbb Z}}
\def\a{\alpha}
\def\e{\epsilon}
\begin{document}

\title[$p$-adic monomial equations]
{$P$-adic monomial equations and their perturbations}

\author{Farrukh Mukhamedov}
\address{Farrukh Mukhamedov\\
 Department of Mathematical Sciences\\
College of Science, The United Arab Emirates University\\
P.O. Box, 15551, Al Ain\\
Abu Dhabi, UAE} \email{{\tt far75m@gmail.com} {\tt
farrukh.m@uaeu.ac.ae}}

\author{ Otabek Khakimov}
\address{ Otabek Khakimov\\
 Department of Mathematical Sciences\\
College of Science, The United Arab Emirates University\\
P.O. Box, 15551, Al Ain\\
Abu Dhabi, UAE} \email{{\tt hakimovo@mail.ru}}

\begin{abstract}
In this paper, we describe the set of all solutions
of monomial equation $x^k=a$ over $\mathbb Q_p$. Moreover, as
an application of the result, we study several perturbations of the considered equation over $p$-adic field.
 \vskip 0.3cm \noindent {\it
Mathematics Subject Classification}: 37B05, 37B10,12J12, 39A70\\
monomial equation, p-adic numbers, rational function; \end{abstract}

\maketitle

\section{Introduction}

At present, $p$-adic analysis is a rapidly developing trend in mathematics \cite{KKZ}. Numerous applications
of $p$-adic numbers have resulted in the theory of $p$-adic differential equations, $p$-adic probability theory,
$p$-adic mathematical physics, etc. (see  \cite{ALB,AK,Drag,Drag2,Khr,VVZ}).
It is known \cite{Borevich} that $p$-adic numbers are also closely connected with Diophantine equations, i.e. in finding all solutions of a system of polynomial equations or to give a bound for the number of
solutions over the field of $p$-adic numbers $\bq_p$.  We notice that, in general, the same Diophantine problem
may have different solutions from the field of $p$-adic numbers to the field of real numbers
because of the different topological structures \cite{MOSM13}.

One of the simplest of these equations is an equation of the form $x^q = a$
over $\bq_p$, where $q\in\bn$ and $a\in\bq_p$. A criterion for the solvability of this equation from the standpoint of
algebraic number theory was given in \cite{FMMS_JNT}. In \cite{COR} the mentioned criterion has been applied to classification problems of Leibnitz algebras over $\mathbb Q_p$.
Unfortunately, in both papers \cite{COR,FMMS_JNT} the authors have not interested in a number of solutions of the monomial
equation. This question, it is very important to know locations of solutions  when one studies the fixed point problems and dynamical behavior of $p$-adic rational functions \cite{ReportMath,MFKhO3}.
Therefore, in the present paper, we describe the set of solutions of the monomial equation and their locations.  Moreover,
we will provide its applications for solving several nonlinear equations over $\bq_p$. Namely, we consider a perturbation of the monomial equation, i.e. $F(x)=x^n+\e f(x)$, where $f:\bz_p\to\bz_p$ is a $p^\a$-Lipschitz function. Under some conditions,
we are able to find locations of solutions of the equation $F(x)=0$ by means of a generalized Hensel's Lemma \cite{YurKhren} with certain modification. This result also can be considered as a natural application of the generalized Helsel's Lemma \cite{YurKhren,YurKhren1} to the
perturbed monomial systems, but in our context, we establish a number of its solutions, and their locations as well.  Furthermore, a number of fixed points of Bethe-Potts type (see \cite{M14,MFKhO3}) rational functions is obtained.

\section{Preliminaries}
 In this section, we recall some definitions related to the $p$-adic analysis and we introduce
the necessary notations.

Let $\Q$ be the field of rational numbers. For a fixed prime number
$p$, every rational number $x\ne 0$ can be represented in the form
$x = p^r{n\over m}$, where $r, n\in \Z$, $m$ is a positive integer,
and $n$ and $m$ are relatively prime with $p$: $(p, n) = 1$, $(p, m)
= 1$. The $p$-adic norm of $x$ is given by
$$|x|_p=\left\{\begin{array}{ll}
p^{-r}\ \ \mbox{for} \ \ x\ne 0\\
0\ \ \mbox{for} \ \ x = 0.
\end{array}\right.
$$
This norm is non-Archimedean  and satisfies the so called {\it strong
triangle inequality}
$$|x+y|_p\leq \max\{|x|_p,|y|_p\}.$$

The completion of $\Q$ with respect to the $p$-adic norm defines
the $p$-adic field
 $\Q_p$. We point out that $\mathbb Q_p$ is not an ordered field \cite{sch}.

Any $p$-adic number $x\ne 0$ can be uniquely represented
in the canonical form
\begin{equation}\label{ek}
x = p^{\gamma(x)}(x_0+x_1p+x_2p^2+\dots),
\end{equation}
where $\gamma(x)\in \Z$ and the integers $x_j$ satisfy: $x_0 > 0$,
$0\leq x_j \leq p - 1$. In this case, $|x|_p =
p^{-\gamma(x)}$.

In what follows, to simplify our calculations, we are going to introduce new symbols "O" and "o" (Roughly speaking, these symbols replace the notation $\equiv \ (mod \ p^k)$ without noticing about power of $k$).
 Namely, for a given $p$-adic number $x$ by $O[x]$ we mean a $p$-adic number with the norm $p^{-\gamma(x)}$, i.e. $|x|_p=|O(x)|_p$. By
 $o[x]$, we mean a $p$-adic number with a norm strictly less than $p^{-\gamma(x)}$, i.e. $|o(x)|_p<|x|_p$.
 For instance,
 if $x=1-p+p^2$, we can write $O[1]=x$, $o[1]=x-1$ or $o[p]=x-1+p$.
Therefore, the symbols $O[\cdot]$ and $o[\cdot]$ make our work easier when we need
to calculate the $p$-adic norm of $p$-adic numbers.
It is easy to see that $y=O[x]$ if and only if $x=O[y]$.


We give some basic properties of $O[\cdot]$ and $o[\cdot]$, which will be used later on.

\begin{lemma} Let $x,y\in\mathbb Q_p$. Then the following statements hold:\\
\begin{enumerate}
\item[$1^\circ$.] $O[x]O[y]=O[xy]$;\\
\item[$2^\circ$.] $xO[y]=O[y]x=O[xy]$;\\
\item[$3^\circ$.] $O[x]o[y]=o[xy]$;\\
\item[$4^\circ$.] $o[x]o[y]=o[xy]$;\\
\item[$5^\circ$.] $xo[y]=o[y]x=o[xy]$;\\
\item[$6^\circ$.] $\frac{O[x]}{O[y]}=O\left[\frac{x}{y}\right]$, if $y\neq0$;\\
\item[$7^\circ$.] $\frac{o[x]}{O[y]}=o\left[\frac{x}{y}\right]$, if $y\neq0$.
\end{enumerate}
\end{lemma}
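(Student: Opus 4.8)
The plan is to reduce every one of the seven identities to a single fact: the $p$-adic absolute value is multiplicative, $|ab|_p=|a|_p|b|_p$. Throughout I read the symbol $O[x]$ as ``an arbitrary $u\in\bq_p$ with $|u|_p=|x|_p$'' and $o[x]$ as ``an arbitrary $v\in\bq_p$ with $|v|_p<|x|_p$'', and I tacitly assume the relevant arguments are nonzero wherever a statement involves $O[\cdot]$ or a denominator (since $O[0]=0$ is degenerate and $o[0]$ is not defined). Proving a symbol identity $A=B$ then just means: every legitimate representative of $A$ is a legitimate representative of $B$.

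First I would prove $1^\circ$. Let $u$ be any representative of $O[x]$ and $w$ any representative of $O[y]$. Then $|uw|_p=|u|_p|w|_p=|x|_p|y|_p=|xy|_p$, so $uw$ is a representative of $O[xy]$; since $u,w$ were arbitrary, this is exactly the asserted equality. Statement $2^\circ$ is the special case $u=x$ (a number is its own $O$-representative), and the chain $xO[y]=O[y]x$ is merely commutativity of multiplication in $\bq_p$. Next, for $3^\circ$, with $u=O[x]$ and $v=o[y]$ one gets $|uv|_p=|u|_p|v|_p=|x|_p|v|_p<|x|_p|y|_p=|xy|_p$, hence $uv=o[xy]$. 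Items $4^\circ$ and $5^\circ$ follow by the same computation, replacing $|u|_p=|x|_p$ by $|u|_p<|x|_p$ in $4^\circ$ and by $u=x$ in $5^\circ$: in each case a factor of norm $\le$ something times a factor with a strict norm bound still yields a strict bound. For $6^\circ$ and $7^\circ$ I would first note that $O[y]\ne0$ whenever $y\ne0$, because $|O[y]|_p=|y|_p\ne0$, so the quotients make sense; then $\bigl|O[x]/O[y]\bigr|_p=|O[x]|_p/|O[y]|_p=|x|_p/|y|_p=|x/y|_p$ gives $6^\circ$, and the identical computation with $|o[x]|_p<|x|_p$ gives $\bigl|o[x]/O[y]\bigr|_p<|x/y|_p$, which is $7^\circ$.

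There is essentially no obstacle here: every assertion is an immediate corollary of the multiplicativity (and nontriviality) of $|\cdot|_p$, and the only thing to be careful about is bookkeeping with the ``arbitrary representative'' reading of $O[\cdot]$ and $o[\cdot]$, so that one never accidentally claims more than ``same norm'' or ``strictly smaller norm''. If desired, one can package the routine part by observing that $1^\circ$, $2^\circ$ say $O$ is a multiplicative error term of the same size, while $3^\circ$--$5^\circ$ and $7^\circ$ record that multiplying by anything of norm $\le$ a given size cannot destroy a strict drop in norm; these are precisely the manipulation rules invoked freely in the computations of the later sections.
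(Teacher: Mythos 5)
Your proof is correct and rests on the same underlying fact the paper uses, namely the multiplicativity of $|\cdot|_p$; indeed, you are more careful than the paper's own two-line argument, which only treats $1^\circ$ and $2^\circ$ explicitly and leaves the remaining five items to ``the same manner.'' Your explicit ``arbitrary representative'' reading of $O[\cdot]$ and $o[\cdot]$, together with the caveat about nonzero arguments and the well-definedness of the quotients in $6^\circ$--$7^\circ$, is exactly the bookkeeping the paper glosses over, so no changes are needed.
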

\begin{proof}
It is enough to  prove assumptions $1^\circ$ and $2^\circ$, since the others can be proceeded by the same manner.
Due to $x=O[x]$ for all $x\in\mathbb Q_p$, one has $xy=O[x]O[y]$. Noting $xy=O[xy]$,
we obtain $O[x]O[y]=O[xy]$. Similarly, we find $xO[y]=O[x]O[y]=O[xy]$.
\end{proof}

For each $a\in \bq_p$ and $r>0$, we denote $$ B_r(a)=\{x\in \bq_p :
|x-a|_p< r\}.$$ We recall that $\mathbb{Z}_p=\{x\in \Q_p:
|x|_p\leq 1\}$ and $\mathbb Z_p^*=\{x\in\mathbb Q_p: |x|_p=1\}$ are the set of all \textit{$p$-adic integers}
and {\it $p$-adic units}, respectively.

The following result is known as the Hensel's lemma
\begin{lemma}\cite{Borevich,Ko}
Let $F(x)$ be a polynomial whose coefficients are $p$-adic integers.
Let $x^*$ be a $p$-adic integer such that for some $i\geq0$
one has
$$
F(x^*)\equiv0(\operatorname{mod }p^{2i+1}),\ \ \ F'(x^*)\equiv0(\operatorname{mod }p^{i}),\ \ \ F'(x^*)\not\equiv0(\operatorname{mod }p^{i+1}).
$$
Then $F(x)$ has a $p$-adic integer root $x_*$ such that $x_*\equiv x^*(\operatorname{mod }p^{i+1})$.
\end{lemma}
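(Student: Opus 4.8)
The plan is to prove this strong (Newton--Raphson) form of Hensel's lemma by the $p$-adic successive-approximation scheme. I would put $x_0=x^*$ and define recursively
$$
x_{n+1}=x_n-\frac{F(x_n)}{F'(x_n)},\qquad h_n:=x_{n+1}-x_n=-\frac{F(x_n)}{F'(x_n)},
$$
and then reduce everything to showing that this recursion is well defined, that $(x_n)$ converges in $\Z_p$, and that its limit is the root we want.

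The core step would be to establish, simultaneously by induction on $n$, the two invariants:
\begin{romenum}
\item $|F'(x_n)|_p=p^{-i}$; in particular $F'(x_n)\ne0$, so $x_{n+1}$ is defined;
\item writing $|F(x_n)|_p=p^{-(2i+1+k_n)}$, one has $k_n\ge0$ and $k_{n+1}\ge 2k_n+1$.
\end{romenum}
The base case $n=0$ is precisely the hypothesis on $x^*$. For the inductive step, (i) and (ii) give $|h_n|_p=|F(x_n)|_p\,p^{i}=p^{-(i+1+k_n)}\le p^{-(i+1)}$, so $x_{n+1}\in\Z_p$ and $x_{n+1}\equiv x_n\pmod{p^{i+1}}$. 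Since $F$ has $p$-adic integer coefficients and $x_n,h_n\in\Z_p$, the Taylor expansion of $F$ about $x_n$ can be written
$$
F(x_{n+1})=F(x_n)+F'(x_n)h_n+h_n^2\,G(x_n,h_n),\qquad G(x_n,h_n)\in\Z_p,
$$
and the first two terms cancel by the choice of $h_n$, so $|F(x_{n+1})|_p\le|h_n|_p^2=p^{-2(i+1+k_n)}$, which is exactly (ii). Similarly $F'(x_{n+1})=F'(x_n)+h_n\,H(x_n,h_n)$ with $H(x_n,h_n)\in\Z_p$; since $|h_n|_p\le p^{-(i+1)}<p^{-i}=|F'(x_n)|_p$, the strong triangle inequality pins $|F'(x_{n+1})|_p$ to $|F'(x_n)|_p=p^{-i}$, which is (i).

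Once the invariants are in place the rest is routine: from (ii) we get $k_n\to\infty$, hence $|x_{n+1}-x_n|_p=|h_n|_p\to0$, so $(x_n)$ is Cauchy in the complete space $\Z_p$ and converges to some $x_*\in\Z_p$. Continuity of the polynomial $F$ together with $|F(x_n)|_p=p^{-(2i+1+k_n)}\to0$ gives $F(x_*)=0$, and from $x_*-x^*=\sum_{n\ge0}h_n$ with every $|h_n|_p\le p^{-(i+1)}$ the strong triangle inequality yields $|x_*-x^*|_p\le p^{-(i+1)}$, i.e. $x_*\equiv x^*\pmod{p^{i+1}}$.

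I expect the main obstacle to be the simultaneous propagation of (i) and (ii) in the inductive step: one must check that the second-order Taylor remainder $h_n^2 G(x_n,h_n)$ is small enough to keep strictly improving $|F(x_n)|_p$, while at the same time the first-order correction $h_n H(x_n,h_n)$ stays strictly below $|F'(x_n)|_p$, so that the ultrametric freezes $|F'(x_n)|_p$ at $p^{-i}$ forever. Everything else --- completeness of $\Z_p$, continuity of polynomials, and the strong triangle inequality --- is standard $p$-adic material recalled above.
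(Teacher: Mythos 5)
Your proposal is correct: the paper does not prove this lemma but cites it to Borevich--Shafarevich and Koblitz, and your Newton--Raphson iteration with the two invariants $|F'(x_n)|_p=p^{-i}$ and $|F(x_n)|_p\le p^{-(2i+1+2^n-1)}$ is exactly the standard argument given in those references. The only cosmetic point is the degenerate case $F(x_n)=0$ for some $n$ (where writing $|F(x_n)|_p=p^{-(2i+1+k_n)}$ breaks down): there one simply takes $x_*=x_n$ and stops, since the already-established bound $|x_n-x^*|_p\le p^{-(i+1)}$ gives the required congruence.
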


\begin{rk} In \cite{YurKhren,YurKhren1} it was given a generalization of Hensel Lemma for $1$-Lipschitz functions using van der Put decomposition. We notice that other applications of the
van der Put basis have been considered in \cite{AKY}.
\end{rk}

Recall that the $p$-adic exponential is defined by
$$
\exp_p(x)=\sum_{n=0}^{\infty}\frac{x^n}{n!},
$$
which converges for every $x\in B_{p^{-1/(p-1)}}(0)$.
Denote
$$
\mathcal E_p=\left\{x\in\mathbb Q_p: |x-1|_p<p^{-1/(p-1)}\right\}.
$$
This set is the range of the $p$-adic exponential function \cite{Ko,sch}. In the sequel, the following well
known fact will be frequently used without noticing.

\begin{lemma}\cite{sch}\label{epproperty}
Let $p\geq3$. The set $\mathcal E_p$ has the following properties:\\
$(a)$ $\mathcal E_p$ is a group under multiplication;\\
$(b)$ $|a-b|_p<1$ for all $a,b\in\mathcal E_p$;\\
$(c)$ if $a,b\in\mathcal E_p$ then
$|a+b|_p=1$.\\
$(d)$ if $a\in\mathcal E_p$, then
there is an element $h\in B_{p^{-1/(p-1)}}(0)$ such that
$a=\exp_p(h)$.
\end{lemma}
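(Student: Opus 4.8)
The plan is to derive all four statements from the two fundamental facts about the $p$-adic exponential recalled just before the lemma: that $\exp_p$ converges precisely on $B_{p^{-1/(p-1)}}(0)$, and that $\mathcal E_p$ is its range. I would begin with $(d)$, since it is the cleanest: the claim that every $a\in\mathcal E_p$ is of the form $\exp_p(h)$ with $|h|_p<p^{-1/(p-1)}$ is just a restatement of the fact that $\mathcal E_p$ is the range of $\exp_p$; one only has to check that the domain of convergence $B_{p^{-1/(p-1)}}(0)$ is exactly the set of admissible $h$, which is immediate from the convergence radius. For $(a)$, I would use the functional equation $\exp_p(h_1)\exp_p(h_2)=\exp_p(h_1+h_2)$, valid whenever $h_1,h_2$ lie in the domain of convergence: given $a,b\in\mathcal E_p$, write $a=\exp_p(h_1)$, $b=\exp_p(h_2)$ via $(d)$; then $|h_1+h_2|_p\le\max\{|h_1|_p,|h_2|_p\}<p^{-1/(p-1)}$ by the strong triangle inequality, so $ab=\exp_p(h_1+h_2)\in\mathcal E_p$, and $a^{-1}=\exp_p(-h_1)\in\mathcal E_p$ likewise; the identity $1=\exp_p(0)$ lies in $\mathcal E_p$. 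Hence $\mathcal E_p$ is a multiplicative group.

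For $(b)$, note that $a,b\in\mathcal E_p$ means $|a-1|_p<p^{-1/(p-1)}\le 1$ and $|b-1|_p<1$, so $|a-b|_p=|(a-1)-(b-1)|_p\le\max\{|a-1|_p,|b-1|_p\}<1$. For $(c)$, the point is that for $p\ge 3$ one has $p^{-1/(p-1)}<p^{-1/2}<\tfrac12$ in the relevant sense, and more precisely $|2|_p=1$ since $p\ge 3$. Write $a+b=(a-1)+(b-1)+2$; then $|(a-1)+(b-1)|_p\le\max\{|a-1|_p,|b-1|_p\}<p^{-1/(p-1)}<1=|2|_p$, so by the strong triangle inequality (in its sharp form, the isosceles-triangle principle) $|a+b|_p=|2|_p=1$. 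This is where the hypothesis $p\ge 3$ is essential: for $p=2$ one has $|2|_2=\tfrac12$, which is not strictly larger than the small quantities above, and the conclusion fails.

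The main obstacle, such as it is, is bookkeeping rather than depth: one must be careful that the functional equation for $\exp_p$ genuinely holds on the full ball $B_{p^{-1/(p-1)}}(0)$ (this is standard, see \cite{Ko,sch}, and I would simply cite it), and one must track the strict-versus-non-strict inequalities so that the isosceles-triangle principle applies cleanly in $(c)$. No step requires more than the strong triangle inequality, the convergence radius of $\exp_p$, and its multiplicativity; accordingly I would keep the write-up to a few lines per item, citing \cite{sch} for the underlying analytic facts about $\exp_p$.
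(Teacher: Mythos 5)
Your proposal is correct. The paper gives no proof of this lemma --- it is imported verbatim from Schikhof's book \cite{sch} --- and your derivation is the standard one: $(d)$ from the definition of $\mathcal E_p$ as the range of $\exp_p$, $(a)$ from the functional equation of $\exp_p$ on its ball of convergence (or, even more cheaply, directly from $|ab-1|_p=|a(b-1)+(a-1)|_p$ and $|a^{-1}-1|_p=|a-1|_p/|a|_p$, which avoids invoking the exponential at all), $(b)$ from the strong triangle inequality, and $(c)$ from the isosceles-triangle principle together with $|2|_p=1$ for $p\geq3$. One cosmetic slip: the intermediate chain $p^{-1/(p-1)}<p^{-1/2}<\tfrac12$ is false as written (for $p=3$ the first inequality is an equality and $3^{-1/2}>\tfrac12$), but it plays no role, since the statement you actually use --- that $|(a-1)+(b-1)|_p<1=|2|_p$ --- is correct and suffices.
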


In what follows, we need  some auxiliary facts.

\begin{lemma}\cite{MFKhO3}\label{alpbetgam}
Let $k\geq2$ and $p\geq3$. Then for any $\alpha,\beta\in\mathcal E_p$ there exists
$\gamma\in\mathcal E_p$ such that
\begin{equation}\label{abg}
\sum_{j=0}^{k-1}\alpha^{k-j-1}\beta^j=k\gamma
\end{equation}
\end{lemma}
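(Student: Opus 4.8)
The plan is to reduce the claimed identity to a statement about a single element lying in $\mathcal E_p$, and then to exploit that $\mathcal E_p$ is a multiplicative group (Lemma \ref{epproperty}(a)). First I would observe that since $\be\in\mathcal E_p$, it is a unit, hence invertible, and I may factor $\be^{k-1}$ out of the left-hand side of \eqref{abg}. Setting $t=\al\be^{-1}$, which again lies in $\mathcal E_p$ because $\mathcal E_p$ is a group, the sum becomes
\[
\sum_{j=0}^{k-1}\al^{k-j-1}\be^j=\be^{k-1}\sum_{j=0}^{k-1}t^{k-1-j}=\be^{k-1}\bigl(1+t+t^2+\cdots+t^{k-1}\bigr).
\]
So it suffices to show that $1+t+\cdots+t^{k-1}=k\cdot(\text{something in }\mathcal E_p)$; then multiplying by $\be^{k-1}\in\mathcal E_p$ and using that $\mathcal E_p$ is closed under products gives $\g\in\mathcal E_p$ with the required property.

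Next I would handle the reduced claim. Write $t=1+h$ with $|h|_p<p^{-1/(p-1)}$, which is legitimate by the definition of $\mathcal E_p$. Expanding $t^i=(1+h)^i=1+ih+o[h]$ for each $i$ — using the binomial theorem together with the $O[\cdot],o[\cdot]$ calculus of Lemma 2.1 and the fact that the higher binomial terms $\binom{i}{m}h^m$ for $m\ge2$ have norm strictly smaller than $|h|_p$ — I get
\[
\sum_{i=0}^{k-1}t^i=\sum_{i=0}^{k-1}\bigl(1+ih+o[h]\bigr)=k+\frac{k(k-1)}{2}\,h+o[h].
\]
Dividing by $k$ (note $k$ is a unit or at worst has small $p$-adic norm — this is the point to be careful about, see below), the quotient is $1+\frac{(k-1)}{2}h+o[h']$ for a suitable $h'$, and I must check this element lies in $\mathcal E_p$, i.e.\ that its distance to $1$ is strictly less than $p^{-1/(p-1)}$. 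Since $|h|_p<p^{-1/(p-1)}$ and the coefficient $\frac{k-1}{2}$ has $p$-adic norm at most $1$ when $p\ge3$, the strong triangle inequality gives $\bigl|\frac{k-1}{2}h+o[h']\bigr|_p\le|h|_p<p^{-1/(p-1)}$, as needed.

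The main obstacle I anticipate is the division by $k$ when $p\mid k$. If $|k|_p=p^{-s}<1$, then writing $k=p^s k'$ with $k'$ a unit, one needs $\sum_{i=0}^{k-1}t^i$ to be divisible by exactly $p^s$ in the right way so that the quotient by $k$ is still a unit congruent to $1$ modulo $p^{1/(p-1)+}$ — a priori $k+\frac{k(k-1)}2 h+o[h]$ only shows divisibility by $\gcd$ considerations. The cleanest fix is probably to work multiplicatively from the start: since $t\in\mathcal E_p$, Lemma \ref{epproperty}(d) gives $t=\exp_p(u)$ for some $u$ with $|u|_p<p^{-1/(p-1)}$, so $\sum_{i=0}^{k-1}t^i=\frac{t^k-1}{t-1}=\frac{\exp_p(ku)-1}{\exp_p(u)-1}$ when $t\ne1$ (the case $t=1$ being trivial, giving $k$ itself). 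Then $\exp_p(ku)-1=ku\,(1+o[1])$ and $\exp_p(u)-1=u\,(1+o[1])$ since $\exp_p(x)=1+x+o[x]$ on this disk, so the ratio is $k\cdot\frac{1+o[1]}{1+o[1]}$, and the factor $\frac{1+o[1]}{1+o[1]}$ is manifestly a quotient of two elements of $\mathcal E_p$, hence in $\mathcal E_p$ by part (a). Multiplying back by $\be^{k-1}\in\mathcal E_p$ yields $\g\in\mathcal E_p$ with $\sum_{j=0}^{k-1}\al^{k-1-j}\be^j=k\g$, completing the argument. I would present this exponential route as the primary proof and relegate the binomial computation to a remark, since it sidesteps the divisibility difficulty entirely.
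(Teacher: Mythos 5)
The paper does not actually prove this lemma --- it is imported verbatim from \cite{MFKhO3} and used as a black box --- so there is no internal proof to compare against; I can only judge your argument on its own terms, and it is correct. The reduction to $1+t+\cdots+t^{k-1}=k\gamma'$ with $t=\al\be^{-1}\in\mathcal E_p$ is clean, and you correctly identify the one real danger, namely division by $k$ when $p\mid k$: the crude expansion $k+\tfrac{k(k-1)}{2}h+o[h]$ only controls the error term to order $o[h]$, which is useless when $|k|_p<|h|_p$. Your exponential fix handles this exactly right: with $t=\exp_p(u)$ one gets $\sum_{i=0}^{k-1}t^i=\frac{\exp_p(ku)-1}{\exp_p(u)-1}=k\cdot\frac{e_1}{e_2}$ with $e_1=(\exp_p(ku)-1)/(ku)$ and $e_2=(\exp_p(u)-1)/u$ both in $\mathcal E_p$ (the standard estimate $|x^{n-1}/n!|_p\le|x|_p$ for $n\ge2$, $p\ge3$, $|x|_p<p^{-1/(p-1)}$ gives this), and the group property of Lemma \ref{epproperty}(a) finishes it; the $t=1$ edge case is correctly disposed of. Two small remarks. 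First, for $p\ge3$ the condition $|x-1|_p<p^{-1/(p-1)}$ is simply $|x-1|_p\le p^{-1}$, i.e.\ $\mathcal E_p=1+p\Z_p$, which makes several of your norm checks automatic. Second, your ``relegated'' binomial route is in fact also salvageable without the exponential: the hockey-stick identity gives $\sum_{i=0}^{k-1}t^i-k=\sum_{m\ge1}\binom{k}{m+1}h^m$, and since $\binom{k}{m+1}=\frac{k}{m+1}\binom{k-1}{m}$ every term carries an explicit factor of $k$, with $|h^m/(m+1)|_p\le p^{-1}$ for $p\ge3$; this yields $\bigl|\sum t^i-k\bigr|_p\le|k|_p\,p^{-1}$ directly, which is precisely the statement $\gamma'\in\mathcal E_p$. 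Either route is a legitimate, self-contained proof of the cited lemma.
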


It is well known that one can decompose $\mathbb Z_p^*=\bigcup\limits_{j=1}^{p-1}B_1(j)$.  Hence,  as a corollary of Lemma \ref{alpbetgam}
we can the following fact.

\begin{lemma}\label{coralb}
Let $p\geq3$ and $k\geq2$. If $x,y\in B_1(j)$ for some $j\in\{1,2,\dots,p-1\}$ then one has
$$
x^k-y^k=k(x-y)y^{k-1}+o\left[k(x-y)\right].
$$
\end{lemma}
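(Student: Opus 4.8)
The plan is to reduce the statement to Lemma \ref{alpbetgam} by factoring $x^k-y^k$ in the usual way. First I would write
$$
x^k-y^k=(x-y)\sum_{j=0}^{k-1}x^{k-j-1}y^j,
$$
so the whole task is to understand the sum $S:=\sum_{j=0}^{k-1}x^{k-j-1}y^j$ when $x,y\in B_1(j_0)$ for a common residue $j_0\in\{1,\dots,p-1\}$. The key observation is that $x/j_0$ and $y/j_0$ both lie in $\mathcal E_p$: indeed $|x-j_0|_p<1$ forces $|x/j_0-1|_p<1$, and since $p\ge3$ we have $p^{-1/(p-1)}\ge p^{-1/2}$ is still a ball of radius $<1$ — wait, more carefully, $|x/j_0-1|_p\le p^{-1}<p^{-1/(p-1)}$ because the norm takes only powers of $p$, so $x/j_0\in\mathcal E_p$ and likewise $y/j_0\in\mathcal E_p$.

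Next I would set $\alpha=x/j_0$, $\beta=y/j_0$ and apply Lemma \ref{alpbetgam} to get $\gamma\in\mathcal E_p$ with $\sum_{j=0}^{k-1}\alpha^{k-j-1}\beta^j=k\gamma$. Multiplying through by $j_0^{k-1}$ gives $S=k\gamma j_0^{k-1}$. Now I need to compare $\gamma j_0^{k-1}$ with $y^{k-1}=j_0^{k-1}\beta^{k-1}$. Both $\gamma$ and $\beta^{k-1}$ lie in $\mathcal E_p$ (the latter since $\mathcal E_p$ is a multiplicative group by Lemma \ref{epproperty}(a)), so by Lemma \ref{epproperty}(b) their difference has norm $<1$; hence $\gamma=\beta^{k-1}+o[1]$, and therefore
$$
S=k j_0^{k-1}\big(\beta^{k-1}+o[1]\big)=k\,y^{k-1}+k j_0^{k-1}\,o[1]=k\,y^{k-1}+o[k],
$$
using $|j_0^{k-1}|_p=1$ and the rules $2^\circ,5^\circ$ of the first Lemma. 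Multiplying back by $(x-y)$ and again invoking $2^\circ$ and $5^\circ$ yields
$$
x^k-y^k=(x-y)\big(k y^{k-1}+o[k]\big)=k(x-y)y^{k-1}+o\big[k(x-y)\big],
$$
which is exactly the claim.

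The only subtle point — and the step I would be most careful about — is the passage $\gamma=\beta^{k-1}+o[1]$: one must make sure Lemma \ref{alpbetgam} genuinely outputs an element of $\mathcal E_p$ (it does, by hypothesis) and that $\beta^{k-1}\in\mathcal E_p$, so that Lemma \ref{epproperty}(b) applies and gives strict inequality $|\gamma-\beta^{k-1}|_p<1$, i.e. the error is honestly $o[1]$ rather than merely $O[1]$. Everything else is bookkeeping with the $O[\cdot]$, $o[\cdot]$ calculus, noting throughout that $j_0$ is a $p$-adic unit so powers of $j_0$ do not affect norms. I would present the $\alpha,\beta\in\mathcal E_p$ reduction first, then the application of Lemma \ref{alpbetgam}, then the $\gamma$-to-$\beta^{k-1}$ comparison, and finally the reassembly.
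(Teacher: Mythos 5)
Your proof is correct and follows essentially the same route as the paper: factor $x^k-y^k$ as $(x-y)$ times the geometric-type sum and control that sum via Lemma \ref{alpbetgam}. The only cosmetic difference is that the paper normalizes by $y$ (taking $\alpha=x/y$, $\beta=1$, so the error is $k(x-y)y^{k-1}(\gamma-1)$ directly), whereas you normalize by the common residue $j_0$ and then compare $\gamma$ with $\beta^{k-1}$ --- both reductions are valid and yield the identical estimate.
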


\begin{proof} Let $x,y\in B_1(j)$, then $\frac{x}{y}\in\mathcal E_p$.
Due to Lemma \ref{alpbetgam} there exists $\gamma\in\mathcal E_p$ such that
$$
\sum_{j=0}^{k-1}\left(\frac{x}{y}\right)^{k-j-1}=k+k(\gamma-1).
$$

Hence, using the last expression, we obtain
\begin{eqnarray*}
x^k-y^k&=&y^k\left(\frac{x}{y}-1\right)\sum_{j=0}^{k-1}\left(\frac{x}{y}\right)^{k-j-1}\\
&=&k(x-y)y^{k-1}+k(x-y)y^{k-1}(\gamma-1)\\
&=&k(x-y)y^{k-1}+O[k(x-y)]o[1]\\
&=&k(x-y)y^{k-1}+o[k(x-y)]
\end{eqnarray*}
which completes the proof.
\end{proof}

\begin{cor}\label{coralb1}
Let $p\geq3$ and $k\geq2$. Then for any $x\in\mathbb Z_p^*$ with canonical representation
$$
x=x_0+x_1p+x_2p^2+\dots
$$
one has
$$
x^k-x_0^k=o[k].
$$
\end{cor}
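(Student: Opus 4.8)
The plan is to deduce Corollary \ref{coralb1} directly from Lemma \ref{coralb} by applying it with $y=x_0$. First I would observe that since $x=x_0+x_1p+x_2p^2+\dots$ with $1\le x_0\le p-1$, we have $|x-x_0|_p\le p^{-1}<1$, so $x\in B_1(x_0)$; obviously $x_0\in B_1(x_0)$ as well. Thus the hypothesis of Lemma \ref{coralb} holds with $j=x_0$ and $y=x_0$.

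Next I would simply substitute $y=x_0$ into the conclusion of Lemma \ref{coralb}, which gives
\[
x^k-x_0^k=k(x-x_0)x_0^{k-1}+o\bigl[k(x-x_0)\bigr].
\]
Now I need to estimate the two terms on the right. For the first term, $x_0$ is a $p$-adic unit, so $|x_0^{k-1}|_p=1$, and hence $|k(x-x_0)x_0^{k-1}|_p=|k|_p\,|x-x_0|_p\le |k|_p\,p^{-1}<|k|_p$. For the second term, by definition $|o[k(x-x_0)]|_p<|k(x-x_0)|_p=|k|_p\,|x-x_0|_p\le |k|_p\,p^{-1}<|k|_p$. Therefore both summands have $p$-adic norm strictly less than $|k|_p$, and by the strong triangle inequality their sum also has norm strictly less than $|k|_p$. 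This is exactly the assertion $x^k-x_0^k=o[k]$.

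I do not anticipate any real obstacle here; the statement is essentially a packaging of Lemma \ref{coralb} in the special case $y=x_0$, together with the trivial fact that a $p$-adic unit has norm $1$. The only minor point to be careful about is the degenerate possibility $x=x_0$ (i.e. all higher digits vanish), but then $x^k-x_0^k=0$, and $0=o[k]$ since $|0|_p=0<|k|_p$, so the claim holds trivially in that case too. One could alternatively phrase the whole argument using properties $2^\circ$ and $5^\circ$ of the first Lemma to manipulate the $O[\cdot]$ and $o[\cdot]$ symbols formally, but the norm estimates above make the conclusion transparent.
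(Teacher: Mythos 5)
Your proof is correct and follows exactly the route the paper intends: the corollary is stated without proof as an immediate consequence of Lemma \ref{coralb}, and specializing that lemma to $y=x_0$ together with the observation $|x-x_0|_p\le p^{-1}<1$ is precisely the intended argument. Your separate handling of the degenerate case $x=x_0$ is a sensible extra precaution, since the symbol $o[0]$ is not defined in the paper's conventions.
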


\section{The equation $x^k=a$}

In this section, we are going to describe locations of  all solutions
of the equation
\begin{equation}\label{asequat}
x^k = a, \ \ k\in\bn,\  a\in\bq_p
\end{equation}
 in $\mathbb Q_p$. Let us first notice
that the equation \eqref{asequat} can be considered over $\mathbb Z_p^*$.
Indeed, any nonzero $p$-adic number $x$ has a unique representation of the form $x=\frac{x^*}{|x|_p}$, where $x^*\in\mathbb Z_p^*$.
After
substituting the forms $x=\frac{x^*}{|x|_p}$, $a=\frac{a^*}{|a|_p}$ into \eqref{asequat}, we can get that
$\left(\frac{x_*}{|x|_p}\right)^k=\frac{a_*}{|a|_p}$.
This means that Eq. \eqref{asequat} has a solution in $\mathbb Q_p$ whenever $a\in\mathbb Q_p$ if and only if $|a|_p=p^{kl}$ for some
$l\in\mathbb Z$ and the equation $x_*^k=a_*$ has a solution in $\mathbb Z_p^*$.
Hence, in what follows, we may always assume that $a\in\mathbb Z_p^*$.

Recall that $b\in\mathbb Z$ is called an {\it $m$-th power residue modulo} $p$ if the congruence equation
$x^m\equiv b(\operatorname{mod} p)$ has a solution in $\mathbb Z$. In  the sequel, we always
assume that $p\geq3$.
It is well-known (see \cite{Rosen}) that $b\in\mathbb Z$ is
an $m$-th power residue modulo $p$ if and only if
$b^{\frac{p-1}{d}}\equiv1(\operatorname{mod }p)$, where $d=(m, p-1)$.

\begin{thm}\cite{FMMS_JNT}\label{FM}
Let $p\geq3$ and $k=mp^s$, where $(p,m)=1$, $s\geq0$.
Assume that $a\in\mathbb Q_p$ has the following canonical representation
$$
a=a_0+a_1p+a_2p^2+\dots,\ \ \ \ a_0\in\{1,2,\dots,p-1\}.
$$
Then the equation \eqref{asequat}
has a solution on $\mathbb Z_p^*$ if and only if
\begin{enumerate}
\item[(i)] $a_0$ is $m$-th power residue modulo $p$;
\item[(ii)] $a_0^{p^s}\equiv a(\operatorname{mod }p^{s+1})$.
\end{enumerate}
\end{thm}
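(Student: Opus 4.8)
The plan is to reduce everything to $\mathbb Z_p^*$ (already justified in the text) and to factor the exponent $k=mp^s$ into its prime-to-$p$ part $m$ and its $p$-power part $p^s$, handling each in turn. The guiding principle is that raising to the $m$-th power is a bijection on the units "lifted above a given residue class" precisely when the residue-class obstruction modulo $p$ vanishes, while raising to the $p^s$-th power contracts $\mathcal E_p$ and is controlled by Corollary~\ref{coralb1}.

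First I would treat the case $s=0$, i.e. $k=m$ with $(p,m)=1$. Necessity of (i): if $x^m=a$ with $x\in\mathbb Z_p^*$, reducing modulo $p$ gives $x_0^m\equiv a_0(\operatorname{mod}p)$, so $a_0$ is an $m$-th power residue. Sufficiency: choose $x_0\in\{1,\dots,p-1\}$ with $x_0^m\equiv a_0(\operatorname{mod}p)$, set $F(x)=x^m-a$, and apply the classical Hensel's Lemma (the version stated above with $i=0$): $F(x_0)\equiv 0(\operatorname{mod}p)$ and $F'(x_0)=m x_0^{m-1}\not\equiv 0(\operatorname{mod}p)$ since $(p,m)=1$ and $x_0\in\mathbb Z_p^*$. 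This produces a root in $\mathbb Z_p^*$ with the prescribed first digit, and condition (ii) reads $a_0^{p^0}=a_0\equiv a(\operatorname{mod}p)$, which holds automatically once (i) does—so for $s=0$ only (i) is a genuine constraint, consistent with the statement.

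Next I would handle the pure $p$-power case $k=p^s$, $m=1$. Here (i) is vacuous ($1$-st power residue). For necessity of (ii): if $x^{p^s}=a$ with $x=x_0+x_1p+\cdots$, then by Corollary~\ref{coralb1} applied with exponent $p^s$ (note $p\mid p^s$ when $s\geq1$, and the case $s=0$ is trivial), $x^{p^s}-x_0^{p^s}=o[p^s]$, hence $|a-x_0^{p^s}|_p<|p^s|_p=p^{-s}$, i.e. $a\equiv x_0^{p^s}(\operatorname{mod}p^{s+1})$; reducing mod $p$ gives $x_0\equiv a_0$, so $a_0^{p^s}\equiv a(\operatorname{mod}p^{s+1})$. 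For sufficiency, assuming (ii), write $a=a_0^{p^s}(1+o[p^s]\cdot a_0^{-p^s})$; since the second factor lies in $\mathcal E_p$ (its distance from $1$ is $<p^{-s}\le p^{-1/(p-1)}$ for $p\ge 3$, $s\ge 1$), Lemma~\ref{epproperty}(d) writes it as $\exp_p(h)$, and then $\exp_p(h/p^s)$ is a $p^s$-th root of it; multiplying by the unit $a_0$ (note $a_0^{p^s}\in\mathbb Z_p^*$ so $a_0$ itself is already a $p^s$-th root of $a_0^{p^s}$, trivially) yields $x=a_0\exp_p(h/p^s)\in\mathbb Z_p^*$ with $x^{p^s}=a$.

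Finally I would combine the two: $x^{mp^s}=a$. Necessity is immediate by composing the two necessity arguments (set $y=x^m$, then $y^{p^s}=a$ forces (ii) in the form $y_0^{p^s}\equiv a(\operatorname{mod}p^{s+1})$ with $y_0=x_0^m\bmod p$; separately $x_0^m\equiv a_0$ gives (i), and then $a_0=y_0$ so (ii) becomes $a_0^{p^s}\equiv a$). For sufficiency, given (i) and (ii): by the $s=0$ analysis there is $b\in\mathbb Z_p^*$ with $b^m=a$ and $b\equiv a_0(\operatorname{mod}p)$ (choose the first digit to be the residue of $a_0$); then $b_0=a_0$, and (ii) says $b_0^{p^s}=a_0^{p^s}\equiv a=b^m(\operatorname{mod}p^{s+1})$—but I actually want a $p^s$-th root of $b$, so instead I solve $z^{p^s}=b$ using the pure $p$-power case, which requires $b_0^{p^s}\equiv b(\operatorname{mod}p^{s+1})$. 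The hard part is exactly this compatibility: I must choose $b$ among the $m$-th roots of $a$ so that it additionally satisfies the $p^s$-th-power residue condition relative to itself. I expect to resolve it by the refined remark that $b\equiv a_0(\operatorname{mod}p)$ forces $b^m\equiv a_0^m$ and, more to the point, $b$ and $a_0^{p^s/\!\gcd}$-type congruences propagate; concretely, since $b^m=a$ and $a_0^{p^s}\equiv a\,(\operatorname{mod}p^{s+1})$, one shows $b\equiv a_0^{p^s m^{-1}}$ in a suitable sense, or more cleanly one applies Corollary~\ref{coralb1} to $b$ directly: $b^{p^s}\equiv b_0^{p^s}=a_0^{p^s}\,(\operatorname{mod}p^{s+1})$ automatically, so $b$ already meets the hypothesis of the pure $p$-power case and a $p^s$-th root $z$ of $b$ exists, giving $z^{mp^s}=(z^{p^s})^m=b^m=a$. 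Thus the main obstacle—chaining the two root extractions—dissolves once one observes Corollary~\ref{coralb1} guarantees the needed congruence for any unit, so no delicate selection among the $m$-th roots is required.
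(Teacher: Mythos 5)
The paper itself offers no proof of Theorem \ref{FM} (it is quoted from \cite{FMMS_JNT}), so your argument stands on its own. Your two pure cases are correct --- $k=m$ via classical Hensel, $k=p^s$ via the isometry $\exp_p$ --- and so is the necessity direction of the mixed case. The genuine gap is in the final composition step of the sufficiency direction. Having extracted an $m$-th root $b$ of $a$, you need a $p^s$-th root of $b$, and by your own pure $p$-power case this requires $b\equiv b_0^{p^s}\pmod{p^{s+1}}$. You assert that Corollary \ref{coralb1} gives this ``automatically for any unit,'' but the corollary gives $b^{p^s}\equiv b_0^{p^s}\pmod{p^{s+1}}$, which is a different statement: the needed congruence constrains $b$ itself, not $b^{p^s}$, and it fails for general units. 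Indeed, if it held for every unit, every unit would be a $p^s$-th power; but for $b=4$, $p=3$, $s=1$ one has $4\not\equiv 1^3\pmod 9$, and $4$ has no cube root in $\mathbb Z_3$. The subsidiary claim $b_0=a_0$ is also unjustified: Hensel produces $b$ with $b_0^m\equiv a_0\pmod p$, not $b_0\equiv a_0$.

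The cleanest repair is to compose in the opposite order. Condition (ii) is exactly the hypothesis of your pure $p$-power case applied to $a$ itself, so first extract $y$ with $y^{p^s}=a$; then $y_0^{p^s}\equiv a_0\pmod p$ together with Fermat's little theorem forces $y_0=a_0$, so condition (i) says $y_0$ is an $m$-th power residue and your $s=0$ case yields $x$ with $x^m=y$, whence $x^{mp^s}=(x^m)^{p^s}=a$. (Your order can also be salvaged, but only by the kind of verification you claimed was unnecessary: from $b^m=a\equiv a_0^{p^s}\pmod{p^{s+1}}$ and $(b_0^{p^s})^m=(b_0^m)^{p^s}\equiv a_0^{p^s}\pmod{p^{s+1}}$ one gets $b^m\equiv (b_0^{p^s})^m\pmod{p^{s+1}}$, and since $(m,p)=1$ Lemma \ref{coralb} makes the $m$-th power map an isometry on $B_1(b_0)$, which forces $b\equiv b_0^{p^s}\pmod{p^{s+1}}$.)
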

Although Theorem \ref{FM} presents as a solvability criteria for Eq. \eqref{asequat},
but it is not useful in finding a number of its solutions and their locations. In this section, we are going to resolve the mentioned issues.

For a given $a\in\mathbb Z_p^*$,  let us denote
$$
Sol_p(x^k-a)=\left\{\xi\in\mathbb F_p: \xi^k-a=o[1]\right\}
$$
 and
$$
\kappa_p=card\left(Sol_p(x^k-a)\right),
$$
where $\mathbb F_p$ is a ring of integers modulo $p$ and $card(A)$ stands for the cardinality of a set $A$. We notice that
$0\leq\kappa_p\leq k$. We observe that the condition $Sol_p(x^k-a)\neq\emptyset$ is equivalent to
$a_0$ is $k$-th power residue modulo $p$.

\begin{rk}
According to Theorem \ref{FM}, a necessity condition for the solvability
of \eqref{asequat} is $Sol_p(x^k-a)\neq\emptyset$. Namely,
let $\overline{x}\in\mathbb Z_p^*$, with canonical representation
$$
\overline{x}=\overline{x}_0+\overline{x}_1p+\overline{x}_2p^2+\dots
$$
If $\overline{x}$ is a solution of \eqref{asequat} then
$\overline{x}_0\in Sol_p(x^k-a)$. Hence, for any $\xi\in\mathbb F_p\setminus Sol_p(x^k-a)$
the equation \eqref{asequat} has no solution in $B_1(\xi)$.
\end{rk}

Now we provide a main result of this section which extends Theorem \ref{FM}.

\begin{thm}\label{thm_main1}
Let $p\geq3$ and $k=mp^s$, where $(p,m)=1$, $s\geq0$. Assume that $a\in\mathbb Z_p^*$ and has the following canonical form
$$
a=a_0+a_1p+a_2p^2+\dots,
$$
and
$Sol_p(x^k-a)\neq\emptyset$. Then the followings statements are equivalent:
\begin{enumerate}
\item[(i)] the equation \eqref{asequat} has a solution;
\item[(ii)] $a=a_0^{p^s}+o\left[p^s\right]$;
\item[(iii)] for any $\xi\in Sol_p(x^k-a)$ Eq. \eqref{asequat} has a unique solution in $B_1(\xi)$.
\end{enumerate}
\end{thm}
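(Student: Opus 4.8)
The plan is to establish the cycle of implications $(i)\Rightarrow(ii)\Rightarrow(iii)\Rightarrow(i)$, since $(iii)\Rightarrow(i)$ is immediate once $Sol_p(x^k-a)\neq\emptyset$ is assumed. The guiding idea is that any unit $x$ with canonical first digit $x_0$ satisfies $x^k = x_0^k + o[k]$ by Corollary \ref{coralb1}, and that $|k|_p = p^{-s}$; combining these with the hypothesis that a solution exists should pin down $a$ modulo $p^{s}$ times the residual, giving $(ii)$. Conversely, once $(ii)$ holds, I would run a Hensel-type argument on each ball $B_1(\xi)$ with $\xi\in Sol_p(x^k-a)$ using the refined mean-value estimate $x^k - y^k = k(x-y)y^{k-1} + o[k(x-y)]$ from Lemma \ref{coralb}: this makes $x\mapsto x^k$ a contraction-like map on $B_1(\xi)$ after dividing out the unit $y^{k-1}$, yielding existence and uniqueness there.

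First I would prove $(i)\Rightarrow(ii)$. Suppose $\overline{x}=\overline{x}_0+\overline{x}_1p+\cdots$ solves $x^k=a$. By Corollary \ref{coralb1}, $a = \overline{x}^k = \overline{x}_0^{k} + o[k] = \overline{x}_0^{mp^s} + o[p^s]$, using $|k|_p=p^{-s}$. Now I must replace $\overline{x}_0^{mp^s}$ by $a_0^{p^s}$. Since $\overline{x}_0\in Sol_p(x^k-a)$ we have $\overline{x}_0^{k}\equiv a_0\ (\mathrm{mod}\ p)$, i.e. $\overline{x}_0^{mp^s}\equiv a_0\ (\mathrm{mod}\ p)$; as $p^s$-th powers commute with reduction mod $p$ (Frobenius), $\overline{x}_0^{m}\equiv a_0\ (\mathrm{mod}\ p)$, hence $\overline{x}_0^{m}$ and $a_0$ lie in the same ball $B_1(j)$. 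Then $\overline{x}_0^{mp^s}$ and $a_0^{p^s}$ are $p^s$-th powers of elements of the same $B_1(j)$, so by Corollary \ref{coralb1} applied with exponent $p^s$ (or directly by Lemma \ref{coralb}) their difference is $o[p^s]$. Therefore $a = a_0^{p^s} + o[p^s]$, which is $(ii)$.

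Next, $(ii)\Rightarrow(iii)$. Fix $\xi\in Sol_p(x^k-a)$, so $\xi^k\equiv a\ (\mathrm{mod}\ p)$. I would look for a root of $F(x)=x^k-a$ in $B_1(\xi)$ by successive approximation starting from $x^{(0)}$ with first digit $\xi$. Using Lemma \ref{coralb}, for $x,y\in B_1(\xi)$ one has $F(x)-F(y) = k(x-y)y^{k-1}+o[k(x-y)]$, and since $y^{k-1}\in\mathbb Z_p^*$ this shows $|F(x)-F(y)|_p = |k|_p|x-y|_p = p^{-s}|x-y|_p$ whenever $|x-y|_p<1$. The key point is then to solve $F(x)=0$ via a Newton/Hensel iteration adapted to the degeneracy $|F'|_p=p^{-s}$: I expect to verify, using $(ii)$, that the starting approximation satisfies $|F(x^{(0)})|_p\le p^{-(2s+1)}$ while $|F'(x^{(0)})|_p=p^{-s}$, so the generalized Hensel Lemma (Lemma \ref{Hensel}, the $i=s$ case) applies and produces a root $x_*\in B_1(\xi)$ with $x_*\equiv x^{(0)}\ (\mathrm{mod}\ p^{s+1})$. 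Uniqueness within $B_1(\xi)$ follows from the estimate above: if $x,y\in B_1(\xi)$ are both roots then $0=|F(x)-F(y)|_p=p^{-s}|x-y|_p$, forcing $x=y$. Finally $(iii)\Rightarrow(i)$ is trivial because $Sol_p(x^k-a)\neq\emptyset$ by hypothesis, so there is at least one ball $B_1(\xi)$ and hence a solution.

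The main obstacle I anticipate is the bookkeeping in $(ii)\Rightarrow(iii)$: one must show that the $o[p^s]$ appearing in hypothesis $(ii)$ translates precisely into $|F(x^{(0)})|_p\le p^{-(2s+1)}$ at a suitably chosen initial approximation, which may require adjusting $x^{(0)}$ beyond its first digit (choosing its digits up to level $s$) so that the congruences $F(x^{(0)})\equiv 0\ (\mathrm{mod}\ p^{2s+1})$ needed by Lemma \ref{Hensel} actually hold. Equivalently, one needs a lifting-the-exponent style computation showing $\gamma(F(x^{(0)})) \ge 2s+1$ is achievable. The estimates from Lemma \ref{coralb} and Corollary \ref{coralb1} are exactly calibrated for this — the error terms are always $o[k]=o[p^s]$, never worse — so the argument should close, but the indexing of the Hensel hypotheses is where care is required.
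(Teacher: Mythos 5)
Your proposal follows essentially the same route as the paper: $(i)\Rightarrow(ii)$ via Corollary \ref{coralb1}, Fermat's little theorem to pass from $\overline{x}_0^{k}$ to $\overline{x}_0^{m}$, and Lemma \ref{coralb} applied to the $p^s$-th powers; and $(ii)\Rightarrow(iii)$ by improving an initial approximation in $B_1(\xi)$ until the classical Hensel hypotheses $|F(x^{(0)})|_p<|k^2|_p$, $|F'(x^{(0)})|_p=|k|_p$ hold, with uniqueness from the estimate $|x^k-y^k|_p=|k|_p|x-y|_p$. The one obstacle you flag is resolved in the paper exactly as you anticipate, by the explicit Newton iteration $x_{(n)}=x_{(n-1)}+\bigl(a-x_{(n-1)}^k\bigr)/\bigl(kx_{(n-1)}^{k-1}\bigr)$, for which Lemma \ref{coralb} gives $x_{(n)}^k-a=o\bigl[x_{(n-1)}^k-a\bigr]$, so finitely many steps reach $o[k^2]$.
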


\begin{proof} The implication (iii)$\Rightarrow$(i) is obvious. So, we will prove only (i)$\Rightarrow$(ii)
and (ii)$\Rightarrow$(iii).

(i)$\Rightarrow$(ii) Let us suppose that (i) holds, i.e. \eqref{asequat} has a solution $\overline{x}$. Keeping in mind
$k=mp^s$, due to
Corollary \ref{coralb1} we have
\begin{equation}\label{111}
\overline{x}^k-\overline{x}_0^k=o[p^s],
\end{equation}
which implies
$a=\overline{x}_0^k+o[1]$. Then from $a=a_0+o[1]$ one gets
\begin{equation}\label{11}
\overline{x}_0^k=a_0+o[1].
\end{equation}
On the other hand, we have $\overline{x}_0^{p^s}=\overline{x}_0+o[1]$ (It follows from Fermat's little theorem).
Then according to Lemma \ref{coralb}
one finds $\overline{x}_0^{k}=\overline{x}_0^m+o[1]$. The last one together with \eqref{11}
yields
\begin{equation}\label{12}
\overline{x}_0^m=a_0+o[1].
\end{equation}
Hence, again using Lemma \ref{coralb}, from \eqref{12} we obtain
\begin{eqnarray}\label{13}
\overline{x}_0^k-a_0^{p^s}=\left(\overline{x}_0^m\right)^{p^s}-a_0^{p^s}&=&p^{s}(\overline{x}_0^m-a_0)+o\left[p^{s}(\overline{x}_0^m-a_0)\right]\nonumber\\
&=&p^so[1]+o\left[p^s\right]\nonumber\\
&=&o\left[p^s\right]
\end{eqnarray}
From \eqref{111} and \eqref{13} we immediately get (ii).

(ii)$\Rightarrow$(iii) Now, we assume that (ii) holds. Take an arbitrary $\xi\in Sol_p(x^k-a)$ and
define a sequence $\{x_{(n)}\}_{n\geq1}$ as follows:
\begin{equation}\label{seq}
x_{(1)}=\xi,\ \ \ x_{(n)}=x_{(n-1)}+\frac{a-x_{(n-1)}^k}{kx_{(n-1)}^{k-1}},\ \ n\geq2.
\end{equation}
First by induction, we show that $x_{(n)}\in B_1(\xi)$ for any $n\geq1$. Since
$x_{(1)}\in B_1(\xi)$ we may assume that $x_{(n)}\in B_1(\xi)$ for some $n\geq1$.
Thanks to Corollary \ref{coralb1} we obtain
\begin{equation}\label{16}
x_{(n)}^k-\xi^k=o[k].
\end{equation}
On the other hand, we have $\xi^k=a_0+o[1]$, which together with $\xi^{p^s}=\xi+o[1]$ yields
$\xi^m=a_0+o[1]$. Due to Lemma \ref{coralb} we infer
\begin{eqnarray}\label{17}
\xi^k-a_0^{p^s}&=&p^s(\xi^m-a_0)+o\left[p^s(\xi^m-a_0)\right]\nonumber\\
&=&p^so[1]+o\left[p^s\right]\nonumber\\
&=&o[p^s]
\end{eqnarray}
From \eqref{16} and \eqref{17}, noting $k=O[p^s]$ one has
\begin{equation}\label{18}
x_{(n)}^k-a_0^{p^s}=o[k].
\end{equation}
Putting (ii) and \eqref{18} into \eqref{seq} we obtain
$$
x_{(n+1)}=x_{(n)}+o[1],
$$
which implies $x_{(n+1)}\in B_1(x_{(n)})$. The last one together with $x_{(n)}\in B_1(\xi)$ yields that
$x_{(n+1)}\in B_1(\xi)$. Hence, $x_{(n)}\in B_1(\xi)$ for any $n\geq1$.

Furthermore, using Lemma \ref{coralb}, from \eqref{seq} one can find
$$
x_{(n)}^k-x_{(n-1)}^k=a-x_{(n-1)}^k+o\left[a-x_{(n-1)}^k\right],
$$
which implies
$$
x_{(n)}^k-a=o\left[x_{(n-1)}^k-a\right].
$$
The last one together with $x_{(1)}^k-a=o[1]$ yields
$$
x_{(n)}^k-a=o\left[p^{n-1}\right].
$$
So, we can find a number $n_0\geq1$ such that $x_{(n_0)}^k-a=o[k^2]$. Let us consider a
polynomial $F(x)=x^k-a$. It is easy to check that $|F'(x_{n_0})|_p=|k|_p$ and
$|F(x_{(n_0)})|_p<|k^2|_p$. Thanks to Hensel's Lemma, $F$ has a root $x_\xi$
such that $|x_\xi-x_{(n_0)}|_p<1$, which implies $x_\xi\in B_1(\xi)$.

Let us suppose that $x$ and $y$ are solutions of \eqref{asequat} belong to $B_1(\xi)$.
Then according to Lemma \ref{coralb} we obtain
$$
|x^k-y^k|_p=|k(x-y)|_p.
$$
The last one together with $x^k-y^k=0$ imply $x=y$.

Thus we have proved that for any $\xi\in Sol_p(x^k-a)$ the equation \eqref{asequat} has a
unique solution in $B_1(\xi)$.
This completes the proof.
\end{proof}

\begin{cor}\label{cor1}
Let $p\geq3$ and $k=mp^s$, where $(p,m)=1$, $s\geq0$. Then for any $a\in\mathbb Z_p^*$
with canonical representation
$$
a=a_0+a_1p+a_2p^2+\dots,
$$
there exists $x_*\in\mathbb Z_p^*$ such that
$x_*^k=a+o[k^2]$ iff the following statements hold:
\begin{enumerate}
\item[(i)] $Sol_p(x^k-a)\neq\emptyset$;
\item[(ii)] $a=a_0^{p^s}+o\left[p^s\right]$.
\end{enumerate}
\end{cor}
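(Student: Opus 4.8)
The plan is to derive both directions of the equivalence from Theorem \ref{thm_main1}, invoking the classical Hensel's Lemma once to pass from an approximate root to an exact one.

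For the direction ``(i) and (ii) $\Rightarrow$ existence of $x_*$'', I would argue as follows. Assume $Sol_p(x^k-a)\neq\emptyset$ and $a=a_0^{p^s}+o[p^s]$. Then Theorem \ref{thm_main1} applies, and since statement (ii) there is exactly our hypothesis (ii), statement (i) there also holds: Eq. \eqref{asequat} admits a solution $x_*$, which lies in $\mathbb Z_p^*$ because $|x_*|_p^k=|a|_p=1$. For this $x_*$ one has $x_*^k=a$; since $k\neq0$ we have $0=o[k^2]$, so this can be rewritten as $x_*^k=a+o[k^2]$, which is the asserted conclusion.

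For the converse, suppose $x_*\in\mathbb Z_p^*$ satisfies $x_*^k=a+o[k^2]$ and put $F(x)=x^k-a$, a polynomial with $p$-adic integer coefficients. The main computation is to check the hypotheses of Hensel's Lemma with $i=s$. First, $F(x_*)=o[k^2]$, whence $|F(x_*)|_p<|k^2|_p=p^{-2s}$, i.e. $F(x_*)\equiv0\ (\operatorname{mod }p^{2s+1})$. Secondly, $x_*\in\mathbb Z_p^*$ gives $|F'(x_*)|_p=|kx_*^{k-1}|_p=|k|_p=p^{-s}$, so $F'(x_*)\equiv0\ (\operatorname{mod }p^{s})$ while $F'(x_*)\not\equiv0\ (\operatorname{mod }p^{s+1})$. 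Hence Hensel's Lemma yields a root $\overline{x}\in\mathbb Z_p$ of $F$ with $\overline{x}\equiv x_*\ (\operatorname{mod }p^{s+1})$; in particular $\overline{x}\equiv x_*\ (\operatorname{mod }p)$, so $\overline{x}\in\mathbb Z_p^*$ and $\overline{x}$ is a solution of \eqref{asequat}. Writing $\overline{x}=\overline{x}_0+\overline{x}_1p+\cdots$ and reducing modulo $p$ gives $\overline{x}_0^k\equiv a_0\ (\operatorname{mod }p)$, so $\overline{x}_0\in Sol_p(x^k-a)$, which establishes (i). Finally, since \eqref{asequat} now has a solution and $Sol_p(x^k-a)\neq\emptyset$, the implication (i)$\Rightarrow$(ii) of Theorem \ref{thm_main1} gives $a=a_0^{p^s}+o[p^s]$, i.e. (ii).

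I do not expect a genuine obstacle: once Theorem \ref{thm_main1} is in hand the proof is essentially bookkeeping, the only point requiring attention being that in the Hensel step the tolerance $o[k^2]$ in the hypothesis is precisely what matches the requirement $F(x_*)\equiv0\ (\operatorname{mod }p^{2i+1})$ with $i=s$, since $|k^2|_p=p^{-2s}$. An alternative, slightly longer route would avoid Hensel's Lemma altogether and instead rerun the $o[\cdot]$-estimates from the proof of Theorem \ref{thm_main1}: writing $x_*=\overline{x}_0+\overline{x}_1p+\cdots$, Corollary \ref{coralb1} turns $x_*^k=a+o[k^2]$ into $a=\overline{x}_0^k+o[p^s]$, and then, exactly as in the derivation of \eqref{11}--\eqref{13} there, Lemma \ref{coralb} together with Fermat's little theorem forces $\overline{x}_0^k=a_0^{p^s}+o[p^s]$, whence $a=a_0^{p^s}+o[p^s]$.
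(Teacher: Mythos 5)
Your argument is correct and matches what the paper intends: the corollary is stated without proof precisely because it is read off from Theorem \ref{thm_main1} together with the Hensel step already appearing inside that theorem's proof (where $|F(x_{(n_0)})|_p<|k^2|_p$ and $|F'(x_{(n_0)})|_p=|k|_p$ are checked with $i=s$), and your two directions reproduce exactly that reduction. The only points worth noting are ones you already handle: $0=o[k^2]$ makes the ``if'' direction immediate from the exact solution, and the strict inequality $|F(x_*)|_p<p^{-2s}$ upgrades to $F(x_*)\equiv0\ (\operatorname{mod }p^{2s+1})$ because the norm takes values in integer powers of $p$.
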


\begin{cor}\label{coryaxshi}
Let $p\geq3$ and $a\in\mathcal E_p$. Then the following statements hold:
\begin{enumerate}
\item[(i)]
if $|k|_p\leq|a-1|_p$ then \eqref{asequat} has no solution.
\item[(ii)]
if $|k|_p>|a-1|_p$ then \eqref{asequat} has exactly $\kappa_p$ solutions $x_{\xi_i}\in B_1(\xi_i)$,
$i\in\{1,2,\dots,\kappa_p\}$,
where $\xi_i\in Sol_p(x^k-a)$.
\end{enumerate}
\end{cor}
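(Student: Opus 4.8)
The plan is to derive this corollary directly from Theorem \ref{thm_main1}, specialized to $a\in\mathcal E_p$. First I would record two elementary observations. Since $|a-1|_p<p^{-1/(p-1)}<1$, the strong triangle inequality forces $|a|_p=1$, so $a\in\mathbb Z_p^*$ and the first digit in its canonical representation is $a_0=1$; in particular $a-1=o[1]$. Consequently
$$
Sol_p(x^k-a)=\{\xi\in\mathbb F_p:\xi^k-a=o[1]\}=\{\xi\in\mathbb F_p:\xi^k-1=o[1]\},
$$
which contains $\xi=1$, so $Sol_p(x^k-a)\ne\emptyset$ and Theorem \ref{thm_main1} applies.

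Next I would match condition (ii) of Theorem \ref{thm_main1} to the hypotheses of the corollary. Writing $k=mp^s$ with $(p,m)=1$ and $s\ge0$, we have $|k|_p=|p^s|_p=p^{-s}$. Since $a_0=1$, condition (ii) of Theorem \ref{thm_main1} becomes $a=1+o[p^s]$, i.e. $|a-1|_p<|p^s|_p=|k|_p$. Hence the dichotomy $|k|_p\le|a-1|_p$ versus $|k|_p>|a-1|_p$ is precisely the failure versus the validity of condition (ii) of Theorem \ref{thm_main1}. If $|k|_p\le|a-1|_p$, then condition (ii) of that theorem fails, hence so does condition (i), and \eqref{asequat} has no solution; this is part (i). If $|k|_p>|a-1|_p$, then condition (ii) of Theorem \ref{thm_main1} holds, hence condition (iii) holds: for every $\xi\in Sol_p(x^k-a)$ equation \eqref{asequat} has a unique solution $x_\xi\in B_1(\xi)$.

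To finish part (ii) I would count. By the Remark following Theorem \ref{FM}, any solution $\overline x$ of \eqref{asequat} has first digit $\overline x_0\in Sol_p(x^k-a)$, hence lies in one of the balls $B_1(\xi)$ with $\xi\in Sol_p(x^k-a)$; as distinct residues mod $p$ give pairwise disjoint balls, the $\kappa_p$ solutions $x_{\xi_1},\dots,x_{\xi_{\kappa_p}}$ are distinct and exhaust the solution set. This is essentially the whole argument — there is no real obstacle, the corollary being a clean specialization of Theorem \ref{thm_main1}. The only points needing a moment of care are the identification $|k|_p=|p^s|_p$, which is what lets condition (ii) of Theorem \ref{thm_main1} be rewritten as the stated inequality on $|a-1|_p$, and the final counting step, which rests on the disjointness of the balls $B_1(\xi)$ and on the Remark guaranteeing that no solution lies outside them.
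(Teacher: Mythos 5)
Your proposal is correct and follows essentially the same route as the paper: both arguments note that $a\in\mathcal E_p$ forces $a_0=1$ and $Sol_p(x^k-a)\neq\emptyset$, and then observe that condition (ii) of Theorem \ref{thm_main1} reduces to $|a-1|_p<|k|_p$, so the dichotomy in the corollary is exactly the failure or validity of that condition. Your extra care with the final counting step (disjointness of the balls $B_1(\xi)$ and the Remark after Theorem \ref{FM}) only makes explicit what the paper leaves implicit.
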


\begin{proof} We notice that for any $k\in\mathbb N$ and $a\in\mathcal E_p$
the set $Sol_p(x^k-a)$ is not empty.

(i) Since $a_0=1$ by definition $|a-a_0|_p\geq|k|_p$ implies that $a-a_0\neq o[k]$. Then
due to Theorem \ref{thm_main1} we infer that the equation \eqref{asequat} has no solution.

(ii) Suppose that $|k|_p>|a-1|_p$. This yields that $a-a_0=o[k]$. Then according to Theorem \ref{thm_main1} equation \eqref{asequat}
has exactly $\kappa_p$ solutions.
\end{proof}

\begin{rk}
Thanks to Corollary \ref{coryaxshi}, every $a\in\mathcal E_p$ with $|a-1|_p<|k|_p$
has a single $k$-th root on $\mathcal E_p$, which is called {\it the principal $k$-th root}
and it is denoted by $\sqrt[k]{a}$. In what follows, when we write $\sqrt[k]{a}$ for given
$a\in\mathcal E_p$
we always mean the principal $k$-th root of $a$.
\end{rk}

\section{Applications}

In this section, we provide certain applications of Theorem \ref{thm_main1} in solving nonlinear  equations over $p$-adic field.

\subsection{Roots of 1-Lipschitz functions}

Recall that
$f:\mathbb Z_p\to\mathbb Z_p$ is {\it $L$-Lipschitz function} if
\begin{equation}\label{Lip}
|f(x)-f(y)|_p\leq L|x-y|_p,\ \ \ \forall x,y\in\mathbb Z_p.
\end{equation}
We notice that the discretness of $p$-adic norm's values yields $L=p^\alpha$ for some $\alpha\in\mathbb Z$.
So, the condition \eqref{Lip} is equivalent to: $x-y=o[p^k]$ implies
$f(x)-f(y)=o[p^{k-\alpha}]$ for all $k\geq\alpha$.

For a given $k=mp^s$ with $(m,p)=1$, $s\geq0$ we consider the following function
\begin{equation}\label{Func}
F(x)=x^k-a+\varepsilon f(x),
\end{equation}
where $a\in\mathbb Z_p^*$, $|\varepsilon|_p<|k^2|_p$ and $f:\mathbb Z_p\to\mathbb Z_p$ is a $L$-Lipschitz
function with $L\leq p^s$.
One can see that $F(\mathbb Z_p)\subset\mathbb Z_p$. Moreover, $F$ is a 1-Lipschitz
function. In \cite{YurKhren} the authors found a criterion for the existence of a root of
1-Lipschitz
functions. Moreover, they were able to prove the analogue of Hensel's Lemma for
1-Lipschitz functions. In this section, we will show
locations of roots of the equation \eqref{Func} which was not studied in \cite{YurKhren,YurKhren1}.

\begin{thm}\label{thm1}
Let $p\geq3$ and $k=mp^s$ with $(p,m)=1$ and $s\geq0$. Assume that
$F$ is a function given by \eqref{Func}. Then the following statements hold:
\begin{enumerate}
\item[(F1)] if $Sol_p\left(x^k-a\right)=\emptyset$ then $F$ has no root in $\mathbb Z_p$;
\item[(F2)] if $Sol_p\left(x^k-a\right)\neq\emptyset$ then the followings are equivalent:
\begin{enumerate}
\item[(i)] $F$ has a root in $\mathbb Z_p$;
\item[(ii)] $a=a_0^{p^s}+o\left[p^s\right]$;
\item[(iii)] $F$ has exactly $\kappa_p$ roots in $\mathbb Z_p$: $x_{\xi_i}\in B_1(\xi_i)$, $\xi\in Sol_p(x^k-a)$, $i=\overline{1,\kappa_p}$.
\end{enumerate}
\end{enumerate}
\end{thm}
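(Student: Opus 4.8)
The plan is to reduce Theorem \ref{thm1} to Theorem \ref{thm_main1} by treating the $\varepsilon f(x)$ term as a perturbation that is negligible modulo $p^s$ and even modulo $k^2$ on each ball $B_1(\xi)$. First I would prove (F1): if $\overline x\in\mathbb Z_p$ is a root of $F$, then $\overline x\in\mathbb Z_p^*$ (since $F(x)\equiv x^k-a\equiv -a\not\equiv 0\ (\mathrm{mod}\ p)$ whenever $|x|_p<1$, because $|\varepsilon|_p<|k^2|_p\le 1$ and $|a|_p=1$), and reducing $F(\overline x)=0$ modulo $p$ gives $\overline x_0^{\,k}\equiv a\ (\mathrm{mod}\ p)$, i.e. $\overline x_0\in Sol_p(x^k-a)$; hence $Sol_p(x^k-a)=\emptyset$ forces no root. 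This also pins each root into one of the balls $B_1(\xi)$, $\xi\in Sol_p(x^k-a)$, which is what we need for the counting statement in (iii).

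For (F2) I would organize the equivalences as (i)$\Rightarrow$(ii), (ii)$\Rightarrow$(iii), (iii)$\Rightarrow$(i), with (iii)$\Rightarrow$(i) trivial. For (i)$\Rightarrow$(ii): given a root $\overline x$ of $F$, we have $\overline x^k=a-\varepsilon f(\overline x)$; since $f$ is $L$-Lipschitz with $L\le p^s$ and $f$ maps into $\mathbb Z_p$, $|\varepsilon f(\overline x)|_p\le|\varepsilon|_p<|k^2|_p\le|p^s|_p$, so $\overline x^k=a+o[p^s]$, and then the argument of the (i)$\Rightarrow$(ii) part of Theorem \ref{thm_main1} (using Corollary \ref{coralb1} to get $\overline x^k-\overline x_0^{\,k}=o[p^s]$, then Fermat and Lemma \ref{coralb}) gives $\overline x_0^{\,k}-a_0^{p^s}=o[p^s]$, whence $a=\overline x_0^{\,k}+o[p^s]=a_0^{p^s}+o[p^s]$ after checking $\overline x_0=a_0$ (which follows since $\overline x_0^{\,k}\equiv a_0\ (\mathrm{mod}\ p)$ and $a_0\in Sol_p$; more precisely $a_0^{p^s}\equiv a\equiv a_0\ (\mathrm{mod}\ p)$ handled exactly as in the proof of Theorem \ref{thm_main1}).

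For (ii)$\Rightarrow$(iii), I would fix $\xi\in Sol_p(x^k-a)$ and run the same Newton-type iteration as in Theorem \ref{thm_main1}, but for $F$ rather than $x^k-a$: set $x_{(1)}=\xi$ and $x_{(n)}=x_{(n-1)}-F(x_{(n-1)})/(k x_{(n-1)}^{k-1})$. One first shows by induction that $x_{(n)}\in B_1(\xi)$ — here the key point is that on $B_1(\xi)$ one has $x^k-a_0^{p^s}=o[k]$ (as in \eqref{18}), $\varepsilon f(x)=o[k]$ as well since $|\varepsilon|_p<|k^2|_p\le|k|_p$, and (ii) gives $a-a_0^{p^s}=o[p^s]$, hence $F(x_{(n)})=o[1]\cdot kx_{(n)}^{k-1}$ type estimate forcing $x_{(n+1)}=x_{(n)}+o[1]$. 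Then one shows the iteration improves the valuation: using Lemma \ref{coralb} for the $x^k$ part and the Lipschitz bound $\varepsilon f(x_{(n)})-\varepsilon f(x_{(n-1)})=o[\,\varepsilon(x_{(n)}-x_{(n-1)})\,]$-type control for the perturbation, one gets $F(x_{(n)})=o[F(x_{(n-1)})]$, so after finitely many steps $|F(x_{(n_0)})|_p<|k^2|_p$ while $|F'(x_{(n_0)})|_p=|kx_{(n_0)}^{k-1}+\varepsilon f'(\cdot)|_p=|k|_p$ — but since $f$ need not be differentiable I would instead invoke the generalized Hensel's Lemma of \cite{YurKhren} for $1$-Lipschitz functions (recall $F$ is $1$-Lipschitz and $F(\mathbb Z_p)\subset\mathbb Z_p$, as noted after \eqref{Func}) applied to $F$ near $x_{(n_0)}$ to produce a root $x_\xi\in B_1(\xi)$. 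Uniqueness in $B_1(\xi)$: if $x,y\in B_1(\xi)$ are two roots, then $0=F(x)-F(y)=(x^k-y^k)+\varepsilon(f(x)-f(y))$; by Lemma \ref{coralb} the first term has norm $|k(x-y)|_p$, while the second has norm $\le|\varepsilon|_p|x-y|_p<|k^2|_p|x-y|_p\le|k(x-y)|_p$, so by the strong triangle inequality the sum has norm exactly $|k(x-y)|_p$, forcing $x=y$. Summing over the $\kappa_p$ elements $\xi_i$ of $Sol_p(x^k-a)$, and recalling from the (F1) analysis that every root of $F$ lies in some $B_1(\xi_i)$, yields exactly $\kappa_p$ roots, which is (iii).

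The main obstacle I anticipate is handling the perturbation term at the level of the generalized Hensel's Lemma rather than the classical one: the classical Hensel's Lemma used in Theorem \ref{thm_main1} needs $F'$, which is unavailable for a general Lipschitz $f$, so one must either verify carefully that $F$ satisfies the hypotheses of the van der Put–based generalized Hensel's Lemma of \cite{YurKhren} at the point $x_{(n_0)}$, or argue directly that the iteration $\{x_{(n)}\}$ is Cauchy and its limit is a root (which works cleanly here because the $x^k$ part dominates: $|kx_{(n)}^{k-1}|_p=|k|_p$ is bounded below and the $\varepsilon f$ contribution to $F(x_{(n)})/(kx_{(n)}^{k-1})$ is $o[1]$ by the Lipschitz bound, so $x_{(n)}-x_{(n-1)}\to 0$ and $F(x_{(n)})\to 0$). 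Everything else is a routine transcription of the proof of Theorem \ref{thm_main1} with the bookkeeping that $|\varepsilon|_p<|k^2|_p$ makes $\varepsilon f$ smaller than every error term that appears there.
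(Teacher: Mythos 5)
Your proof is correct and follows the paper's overall strategy: pin every root of $F$ into a ball $B_1(\xi)$ with $\xi\in Sol_p(x^k-a)$ by reduction mod $p$, deduce (i)$\Rightarrow$(ii) from $|\varepsilon f(x)|_p<|k^2|_p$ and the unperturbed theory (the paper simply cites Corollary \ref{cor1} here rather than re-running the argument of Theorem \ref{thm_main1}, but the content is identical), produce one root per ball by successive approximation, and prove uniqueness by showing $|F(x)-F(y)|_p=|k(x-y)|_p$ on each ball. The one genuine difference is the lifting mechanism: the paper first invokes Theorem \ref{thm_main1} to obtain a seed $x_{(1)}\in B_1(\xi)$ with $x_{(1)}^k-a=o[k^2]$ and then lifts digit by digit, writing $x_{(n+1)}=x_{(n)}+\alpha_np^{s+n}$ with $\alpha_n\in\{0,\dots,p-1\}$ the unique solution of a linear congruence at each step, whereas you run the Newton iteration $x_{(n+1)}=x_{(n)}-F(x_{(n)})/(kx_{(n)}^{k-1})$ directly on $F$ and argue it is Cauchy with limit a root. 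Your variant is sound: the estimates $F(x_{(n)})=o[p^s]$ (so the increment is $o[1]$ and the iteration stays in $B_1(\xi)$) and $F(x_{(n+1)})=o[F(x_{(n)})]$ (via Lemma \ref{coralb} for the $x^k$ part and the Lipschitz bound for the perturbation) both check out, and the direct Cauchy argument cleanly sidesteps the generalized Hensel lemma of \cite{YurKhren}, which you rightly flag as the delicate alternative. Two small slips, neither fatal: the aside ``after checking $\overline x_0=a_0$'' in (i)$\Rightarrow$(ii) is unnecessary and not generally true ($\overline x_0^{\,k}\equiv a_0$ mod $p$ does not force $\overline x_0=a_0$; the chain $a=\overline x^{\,k}+o[p^s]=\overline x_0^{\,k}+o[p^s]=a_0^{p^s}+o[p^s]$ needs no such identification); and in the uniqueness step the perturbation bound should be $|\varepsilon|_p\,p^s\,|x-y|_p$ rather than $|\varepsilon|_p|x-y|_p$, since $f$ is only $p^s$-Lipschitz --- the corrected bound is still $<|k^2|_p\,p^s\,|x-y|_p=|k(x-y)|_p$, so the conclusion stands.
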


\begin{proof}
$(F1)$ Let $Sol_p(x^k-a)=\emptyset$. Pick any $x\in\mathbb Z_p$. Then we have $x^k-a=O[1]$.
Noting $\varepsilon f(x)=o[1]$
we infer $F(x)=O[1]$, which implies $F(x)\neq0$. Arbitrariness of $x\in\mathbb Z_p$
shows that $F$ has no root in $\mathbb Z_p$.

$(F2)$ Suppose that $Sol_p(x^k-a)\neq\emptyset$. It is enough to show the following implications
(i)$\Rightarrow$(ii)$\Rightarrow$(iii).

(i)$\Rightarrow$(ii). We assume that
$F$ has a root $x_*\in\mathbb Z_p$. Since $\varepsilon f(x)=o[k^2]$ for any $x\in\mathbb Z_p$,
we infer that $x_*^k-a=o[k^2]$. Then due to Corollary \ref{cor1},  (ii) holds.

(ii)$\Rightarrow$(iii). Take any $\xi\in Sol_p(x^k-a)$ and show that
the function $F$ has a unique root in $B_1(\xi)$. According to Theorem
\ref{thm_main1} there exists $x_{(1)}\in B_1(\xi)$ such that $x_{(1)}^k-a=o[k^2]$. Hence,
$\varepsilon f(x_{(1)})=o[p^{2s}]$ yields $F(x_{(1)})=o[p^{2s}]$.
Assume that for any $n\geq1$ we already have numbers $x_{(1)},x_{(2)},\dots,x_{(n)}\in B_1(\xi)$
such that
$$
F(x_{(m)})=o[p^{2s+m-1}],\ \ \ \forall m\leq n.
$$
Now we show the existence an $x_{(n+1)}\in B_1(\xi)$ satisfying $F(x_{(n+1)})=o[p^{2s+n}]$.
Let us denote
\begin{equation}\label{seq4}
x_{(n+1)}=x_{(n)}+\alpha_np^{s+n}
\end{equation}
where $\alpha_n\in\{0,1,\dots,p-1\}$. We need to choose $\alpha_n$ in order to $F(x_{(n+1)})=o[p^{2s+n}]$.

One can check that there exists only one $\alpha_n\in\{0,1,\dots,p-1\}$ which satisfies
\begin{equation}\label{alp}
F(x_{(n)})+k\alpha_np^{s+n}x_{(n)}^{k-1}=o[p^{2s+n}].
\end{equation}
By substituting $\alpha_n$ into \eqref{seq4}, we need to check $F(x_{(n+1)})=o[p^{2s+n}]$.
We first notice that the condition $x_{(n)}\in B_1(\xi)$ implies
$x_{(n+1)}\in B_1(\xi)$. Hence, using Lemma \ref{coralb} from \eqref{seq4}
we obtain
\begin{equation}\label{1u2}
x_{(n+1)}^k-x_{(n)}^k=k\alpha_np^{s+n}x_{(n)}^{k-1}+o[p^{2s+n}]
\end{equation}
Since $f$ is $L$-Lipschitz function with $L\leq p^s$ and noting $\varepsilon=o[p^{2s}]$ one gets
\begin{equation}\label{1u1}
\varepsilon f(x_{(n+1)})=\varepsilon f(x_{(n)})+o[p^{2s+n}].
\end{equation}
From \eqref{alp}-\eqref{1u1} we find
\begin{eqnarray*}
F(x_{(n+1)})&=&x_{(n+1)}^k-a+\varepsilon f(x_{(n+1)})\\
&=&x_{(n)}^k-a+\varepsilon f(x_{(n)})+k\alpha_np^{s+n}x_{(n)}^{k-1}+o[p^{2s+n}]\\
&=&F(x_{(n)})+k\alpha_np^{s+n}x_{(n)}^{k-1}+o[p^{2s+n}]\\
&=&o[p^{2s+n}].
\end{eqnarray*}
So, we have proved that for the sequence $\{x_{(n)}\}_{n\geq1}$ given by \eqref{seq4}
one has $F(x_{(n)})=o[p^{2s+n-1}]$. The sequence \eqref{seq4} is Cauchy and $B_1(\xi)$ is closed
in $\mathbb Q_p$, which yields the existence of $x_\xi\in B_1(\xi)$ such that
$F(x_{\xi})=o[p^{2s+n}]$ for all $n\geq1$. This means that $F(x_{\xi})=0$.

To establish that $F$ has a unique root in $B_1(\xi)$, we suppose the contrary. Namely, let  $x_\xi, y_\xi\in B_1(\xi)$
be two roots of $F$. Then, we obtain
$$
\varepsilon f(x_\xi)-\varepsilon f(y_\xi)=o[p^s(x_\xi-y_\xi)].
$$
Keeping in mind that fact, due to Lemma \ref{coralb} one has
\begin{eqnarray*}
F(x_\xi)-F(y_\xi)&=&x_\xi^k-y_\xi^k+o[p^s(x_\xi-y_\xi)]\\
&=&k(x_\xi-y_\xi)y_\xi^{k-1}+o[p^s(x_\xi-y_\xi)].
\end{eqnarray*}
which is equivalent to $\left|F(x_\xi)-F(y_\xi)\right|_p=|k(x_\xi-y_\xi)|_p$. Hence, we infer $x_\xi=y_\xi$.

Consequently, we have shown that for any $\xi\in Sol_p(x^k-a)$ the function $F$ has a unique root in
$B_1(\xi)$. From the arbitrariness of $\xi$ and due to $B_1(\xi)\cap B_1(\eta)=\emptyset$ for any $\eta\in Sol_p(x^k-a)$ with
$\xi\neq\eta$ we conclude that $F$ has exactly
$\kappa_p$ roots. This completes the proof.
\end{proof}

\begin{rk} We point out that the Hensel's Lemma \cite{Ko} reduces finding of roots (of polynomial equation) to finding roots of some iterative procedure
in each step of which, one needs to solve some linear congruence equation. In the above given procedure, it has been used a nonlinear congruence equation in each step of iteration, which
means we have implicitly used a generalized version of Hensel's Lemma with adoption to our equation.

\end{rk}

\subsection{Roots of Polynomials}

Let us consider the following polynomial
\begin{equation}\label{P(x)}
P_k(x)=x^k+\sum_{j=1}^{k}a^{(j)}x^{k-j},\ \ \ a^{(j)}\in\mathbb Z_p,\ j\in\{1,2,\dots,k\},
\end{equation}
where $a^{(j)}\in\mathbb Z_p$, $1\leq j\leq k$.
\begin{thm}\label{thm_poly}
Let $p\geq3$ and $k=mp^s$ with $(m,p)=1$, $s\geq0$. Assume that $P_k$ be a polynomial given by \eqref{P(x)}
such that
$a^{(k)}\in\mathbb Z_p^*$ and $\max\limits_{1\leq j<k}\left\{\left|a^{(j)}\right|_p\right\}<p^{-2s}$. Then $P_k$ has
a root iff the following statements hold:
\begin{enumerate}
\item[(P1)] $Sol_p\left(x^k+a^{(k)}\right)\neq\emptyset$;
\item[(P2)] $a^{(k)}=\left(a^{(k)}_0\right)^{p^s}+o[p^s]$.
\end{enumerate}
Moreover, under conditions $(P1)$,$(P2)$ the polynomial $P_k$ has exactly $\kappa_p$
solutions $x_{\xi_i}\in B_1(\xi_i)$, $i\in\{1,2,\dots,\kappa_p\}$, where
$\xi_i\in Sol_p\left(x^k+a^{(k)}\right)$ and $\kappa_p$ is a cardinality of $Sol_p\left(x^k+a^{(k)}\right)$.
\end{thm}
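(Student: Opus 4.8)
The plan is to reduce Theorem~\ref{thm_poly} to Theorem~\ref{thm1} by reading $P_k$ as a perturbation of the monomial $x^k-a$ with $a:=-a^{(k)}$. Put $c:=\max_{1\le j<k}|a^{(j)}|_p$. If $c=0$, then $P_k(x)=x^k+a^{(k)}$ and the statement follows at once from Theorem~\ref{thm_main1} applied to the equation $x^k=-a^{(k)}$; so assume $c>0$. Since $c$ is a power of $p$ and $c<p^{-2s}$, we may write $c=p^{-t}$ with an integer $t\ge 2s+1$. Set $\varepsilon:=p^{t}$ and
$$
f(x):=\frac1\varepsilon\sum_{j=1}^{k-1}a^{(j)}x^{k-j},
$$
so that $P_k(x)=x^{k}-a+\varepsilon f(x)$ has precisely the form \eqref{Func}. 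First I would verify that this $f$ is admissible: for $x\in\mathbb Z_p$ each summand $\varepsilon^{-1}a^{(j)}x^{k-j}$ has norm $p^{t}|a^{(j)}|_p|x|_p^{k-j}\le p^{t}c=1$, whence $f(\mathbb Z_p)\subset\mathbb Z_p$; and since $|x^{k-j}-y^{k-j}|_p\le|x-y|_p$ for $x,y\in\mathbb Z_p$ (factor out $x-y$), one gets $|f(x)-f(y)|_p\le p^{t}c\,|x-y|_p=|x-y|_p$, so $f$ is $1$-Lipschitz, in particular $L$-Lipschitz with $L=1\le p^{s}$. Moreover $|\varepsilon|_p=c<p^{-2s}=|k^2|_p$ and $a=-a^{(k)}\in\mathbb Z_p^{*}$, so all hypotheses of Theorem~\ref{thm1} are met for $F:=P_k$.

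Next I would match the two sets of conditions. Since $Sol_p(x^{k}-a)=Sol_p(x^{k}+a^{(k)})$, condition (P1) is nothing but $Sol_p(x^{k}-a)\neq\emptyset$, and the integer $\kappa_p$ in the statement equals $card\bigl(Sol_p(x^{k}-a)\bigr)$. It then remains to see that condition (ii) of Theorem~\ref{thm1}(F2), i.e. $a=a_{0}^{p^{s}}+o[p^{s}]$ with $a_{0}$ the first digit of $a=-a^{(k)}$, is equivalent to (P2). As $a^{(k)}\in\mathbb Z_p^{*}$ one has $a_{0}=p-a^{(k)}_{0}$, and the binomial expansion gives
$$
\bigl(p-a^{(k)}_{0}\bigr)^{p^{s}}=\bigl(-a^{(k)}_{0}\bigr)^{p^{s}}+\binom{p^{s}}{1}p\bigl(-a^{(k)}_{0}\bigr)^{p^{s}-1}+\cdots\equiv\bigl(-a^{(k)}_{0}\bigr)^{p^{s}}\ (\operatorname{mod }p^{s+1}),
$$
while $\bigl(-a^{(k)}_{0}\bigr)^{p^{s}}=-\bigl(a^{(k)}_{0}\bigr)^{p^{s}}$ because $p^{s}$ is odd ($p\ge3$). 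Hence $a_{0}^{p^{s}}\equiv-\bigl(a^{(k)}_{0}\bigr)^{p^{s}}\ (\operatorname{mod }p^{s+1})$, so $-a^{(k)}=a_{0}^{p^{s}}+o[p^{s}]$ holds if and only if $a^{(k)}=\bigl(a^{(k)}_{0}\bigr)^{p^{s}}+o[p^{s}]$, which is exactly (P2).

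With these identifications, Theorem~\ref{thm1} closes the argument. Part (F1) shows that if (P1) fails then $P_k=F$ has no root in $\mathbb Z_p$. Part (F2) shows that, assuming (P1), the existence of a root of $P_k$ in $\mathbb Z_p$ is equivalent to (P2), and is also equivalent to $P_k$ having exactly $\kappa_p$ roots, one $x_{\xi_i}\in B_1(\xi_i)$ for each $\xi_i\in Sol_p(x^{k}-a)=Sol_p(x^{k}+a^{(k)})$; this is precisely the assertion of Theorem~\ref{thm_poly}. I expect the only genuinely non-bookkeeping points to be: (a) verifying that the rescaled tail $f$ is $\mathbb Z_p$-valued and $1$-Lipschitz after dividing by $\varepsilon$, which is where the full strength of $\max_{1\le j<k}|a^{(j)}|_p<p^{-2s}$ enters; and (b) the sign computation turning $a=a_{0}^{p^{s}}+o[p^{s}]$ for $a=-a^{(k)}$ into (P2), which rests on $(-1)^{p^{s}}=-1$ for $p$ odd. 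Everything else is a direct appeal to Theorem~\ref{thm1}.
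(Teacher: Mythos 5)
Your proof is correct and follows essentially the same route as the paper: rescale the lower-order tail by $\varepsilon=1/\max_{1\le j<k}|a^{(j)}|_p$ to put $P_k$ in the form \eqref{Func} and invoke Theorem \ref{thm1}; you are in fact more careful than the paper on the point that condition (ii) of Theorem \ref{thm1} concerns $a=-a^{(k)}$ and \emph{its} leading digit, and your computation $(p-a^{(k)}_0)^{p^s}\equiv-\bigl(a^{(k)}_0\bigr)^{p^s}\ (\operatorname{mod}\ p^{s+1})$ correctly bridges this to (P2). The only (trivial) omission is the paper's opening remark that $|x|_p>1$ forces $|P_k(x)|_p=|x|_p^k>1$, so all roots of $P_k$ lie in $\mathbb Z_p$ and Theorem \ref{thm1} indeed accounts for all of them.
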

\begin{proof} It is easy to check that $|P_k(x)|_p\leq1$ if and only if $|x|_p\leq1$.
So, we infer that $P_k$ has no root in $\mathbb Q_p\setminus\mathbb Z_p$.

Due to $P_k(\mathbb Z_p)\subset\mathbb Z_p$ and Theorem \ref{thm1},
it is enough to establish that $P_k$ can be represented as \eqref{Func}.

Without lost of generality, we may assume that $a^{(j)}\neq0$ for some $j<k$.
Let us define
$$
f(x)=\sum_{j=1}^{k-1}\frac{a^{(j)}}{\varepsilon}x^{k-j},\ \ \
\mbox{where}\ \ \
\varepsilon=\frac{1}{\max\limits_{1\leq j<k}\{|a^{(j)}|_p\}}.
$$
Then $P_k$ can be written as follows:
\begin{equation}\label{P_k}
P_k(x)=x^k+a^{(k)}+\varepsilon f(x),
\end{equation}

We will show $|\varepsilon|_p<|k^2|_p$ and $f$ is $L$-Lipschitz function with $L\leq p^s$.

For any non zero $p$-adic number $y$ it holds $\left||y|_p\right|_p=\frac{1}{|y|_p}$. Using this fact
we can easily find $|\varepsilon|_p<|k^2|_p$. From $\frac{a^{(j)}}{\varepsilon}\in\mathbb Z_p$,
due to the strong triangle inequality we obtain $f(\mathbb Z_p)\subset\mathbb Z_p$.
On the other hand, $f$ is analytic and $|f'(x)|_p<1$ for all $x\in\mathbb Z_p$.
So, noting $f(\mathbb Z_p)\subset\mathbb Z_p$ we infer that $f$ is a $L$-Lipschitz function with $L<1$.
From $s\geq0$ we obtain $L<p^s$. This completes the proof.
\end{proof}

\subsection{Fixed point of rational functions}

In this subsection, we consider a function $f_{b,c,d}:\mathbb Q_p\to\mathbb Q_p$ given by
\begin{equation}\label{f_{b,c,d}}
f_{b,c,d}(x)=\left(\frac{bx-c}{x-d}\right)^k,\ \ \ b,c,d\in\mathcal E_p,\ \ c\neq bd.
\end{equation}
We notice that this type of functions are related to Bethe-Potts mappings associated with certain models of statistical mechanics \cite{ReportMath,M14}. To establish chaotic behavior of this kind of function, it is important to
describe location and a number of their fixed points. In this subsection, we apply Theorem \ref{thm_poly} to get the desired results.

First,  we recall that a point $x\in \bq_p$ is a \textit{fixed point} of $f_{b,c,d}$ if  $f_{b,c,d}(x)=x$.
By $Fix(f_{b,c,d})$ we denote the set of all fixed point of $f_{b,c,d}$.
We stress that the description of the set of fixed points of $f_{b,c,d}$ is important in finding of $p$-adic
Gibbs measures for $p$-adic Potts model \cite{ALS,MR1,M09}.

\begin{pro}
Let $p\geq3$ and $f_{b,c,d}$ be given by \eqref{f_{b,c,d}}. Then $Fix(f_{b,c,d})\subset\mathcal E_p$.
\end{pro}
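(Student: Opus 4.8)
The plan is to show that any fixed point $x$ of $f_{b,c,d}$ must lie in $\mathcal E_p$, by first locating $x$ inside $\mathbb Z_p^*$ and then pinning down its residue modulo $p$. Suppose $f_{b,c,d}(x)=x$. First I would rule out $x=0$ (since $f_{b,c,d}(0)=(c/d)^k$ and $c/d\in\mathcal E_p$ by Lemma~\ref{epproperty}$(a)$, so $|f_{b,c,d}(0)|_p=1\neq 0$) and $x=d$ (the function is not defined there, or rather its value has a pole, hence cannot equal the finite number $d$). So $x\neq 0$ and $x\neq d$, and we may write the fixed point equation as $(bx-c)^k=x(x-d)^k$.

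Next I would analyze $|x|_p$. Write $y=\frac{bx-c}{x-d}$, so $y^k=x$. If $|x|_p>1$, then $|x-d|_p=|x|_p$ and $|bx-c|_p=|x|_p$ (since $|b|_p=1$, $|d|_p=1$, $|c|_p=1$), hence $|y|_p=1$, giving $|x|_p=|y^k|_p=1$, a contradiction. If $|x|_p<1$, then $|x-d|_p=1$ and $|bx-c|_p=1$, so again $|y|_p=1$ and $|x|_p=1$, a contradiction. Therefore $|x|_p=1$, i.e. $x\in\mathbb Z_p^*$.

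It remains to show the residue $x_0$ of $x$ modulo $p$ equals $1$, which combined with the $p$-adic estimate will give $x\in\mathcal E_p$ once $p\geq 3$. Since $b,c,d\in\mathcal E_p$ we have $b\equiv c\equiv d\equiv 1\ (\mathrm{mod}\ p)$, so $bx-c\equiv x-1$ and $x-d\equiv x-1\ (\mathrm{mod}\ p)$; as $x\in\mathbb Z_p^*$ we have $|x|_p=1$, and I would check (using $c\neq bd$, which guarantees $bx-c$ and $x-d$ do not vanish simultaneously and the rational function is genuinely defined) that $\frac{bx-c}{x-d}\in\mathcal E_p$ — the key point being that modulo $p$ the numerator and denominator agree, so their ratio is $\equiv 1$, and the $p$-adic refinement needed is that $|bx-c-(x-d)|_p=|(b-1)x-(c-d)|_p<1$ while $|x-d|_p$ may be as small as $1/p$ or smaller; here one must be a bit careful. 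The cleanest route: reduce the fixed-point equation $y^k=x$ modulo $p$, noting $y\equiv \frac{x-1}{x-1}$ wherever $x\not\equiv 1$, forcing $y\equiv 1$ and hence $x=y^k\equiv 1\ (\mathrm{mod}\ p)$; and if $x\equiv 1\ (\mathrm{mod}\ p)$ we are immediately in the residue class $1$. Either way $x_0=1$. Finally, from $x=y^k$ with $y\in\mathcal E_p$ and $\mathcal E_p$ a multiplicative group (Lemma~\ref{epproperty}$(a)$), we get $x\in\mathcal E_p$ directly — actually this last observation short-circuits much of the above: once we know $\frac{bx-c}{x-d}\in\mathcal E_p$, its $k$-th power lies in $\mathcal E_p$, so $x\in\mathcal E_p$.

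The main obstacle is the step showing $\frac{bx-c}{x-d}\in\mathcal E_p$ when $x-d$ has small norm, i.e. when $x$ is very close to $d$: here the denominator is tiny, and one must verify that the numerator $bx-c$ is correspondingly tiny with the ratio still within $p^{-1/(p-1)}$ of $1$. I expect one handles this by writing $x-d = t$ with $|t|_p<1$, so $bx-c = b(d+t)-c = (bd-c)+bt$; since $bd-c\in\mathcal E_p - \mathcal E_p$ need not be small, in fact $|bd-c|_p$ could be $1$, which would force $|bx-c|_p=1\neq|x-d|_p$ and then $|y|_p>1$, contradicting $|x|_p=1$. So in fact the case $|x-d|_p<1$ cannot occur together with $bd\neq c$ unless $|bd-c|_p<1$ as well; a short case-check using $b,c,d\in\mathcal E_p$ (hence $bd, c\in\mathcal E_p$ and by Lemma~\ref{epproperty}$(b)$ $|bd-c|_p<1$) shows $|bd-c|_p<1$ always, so the numerator is automatically small when the denominator is, and the ratio stays in $\mathcal E_p$. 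Once this bookkeeping is done, the conclusion $Fix(f_{b,c,d})\subset\mathcal E_p$ follows.
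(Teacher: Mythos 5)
Your argument runs in the opposite direction from the paper's: the paper proves the contrapositive, taking $x\in\mathbb Q_p\setminus\mathcal E_p$ (split into $|x|_p>1$ and $|x|_p\le 1$) and showing that then $\tfrac{bx-c}{x-d}\in\mathcal E_p$, so $f_{b,c,d}(x)\in\mathcal E_p\not\ni x$. That choice quietly disposes of the one delicate configuration: since $\mathcal E_p$ is an ultrametric ball containing $d$, the assumption $x\notin\mathcal E_p$ forces $|x-d|_p=1$, and the estimate $\bigl|\tfrac{(b-1)x+d-c}{x-d}\bigr|_p\le p^{-1}<p^{-1/(p-1)}$ is then immediate. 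Your forward argument (first $|x|_p=1$, then the residue of $x$) is sound in outline, and the ``cleanest route'' you sketch in the middle --- reduce $y^k=x$ modulo $p$ with $y=\tfrac{bx-c}{x-d}$, conclude $x\equiv 1\ (\operatorname{mod}\ p)$, and use $p\ge 3$ to get $|x-1|_p\le p^{-1}<p^{-1/(p-1)}$ --- does close the proof.

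Your final paragraph, however, contains a genuine error. For $|x-d|_p<1$ you claim that because $|bd-c|_p<1$, ``the numerator is automatically small when the denominator is, and the ratio stays in $\mathcal E_p$.'' This does not follow: $y\in\mathcal E_p$ requires $\bigl|\tfrac{(b-1)x+d-c}{x-d}\bigr|_p<p^{-1/(p-1)}$, and when $|x-d|_p=p^{-j}$ with $j\ge 1$ the bound $\le p^{-1}$ on the numerator only yields $|y-1|_p\le p^{\,j-1}$, which is never $<p^{-1/(p-1)}$. Nor can you recover $y\in\mathcal E_p$ from $x=y^k\in\mathcal E_p$, since $k$-th powers can land in $\mathcal E_p$ without their roots doing so (e.g.\ $2^4=16\in\mathcal E_5$ while $2\notin\mathcal E_5$). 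The case is harmless for a reason you never state: if $|x-d|_p<1$ then $|x-d|_p\le p^{-1}<p^{-1/(p-1)}$, and since $|d-1|_p<p^{-1/(p-1)}$ the strong triangle inequality gives $|x-1|_p<p^{-1/(p-1)}$ outright, i.e.\ $x\in\mathcal E_p$ with no reference to $y$ at all. Replace the last paragraph by this one-line observation (or simply rely on your mod-$p$ argument, whose branch ``$x\equiv 1$'' already absorbs this case) and the proof is complete.
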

\begin{proof}
Let us take $x\notin\mathbb Q_p\setminus\mathcal E_p$.
We consider two several cases: $|x|_p>1$ and $|x|_p\leq1$.\\
{\it Case $|x|_p>1$.}
Then we have
$$
\frac{bx-c}{x-d}-1=\frac{(b-1)x+d-c}{x-d}=\frac{o[1]x+o[1]}{O[x]+O[1]}=\frac{o[x]}{O[x]}=o[1],
$$
which yields $\frac{bx-c}{x-d}\in\mathcal E_p$. Then, due to Lemma \ref{epproperty} we infer $f_{b,c,d}(x)\in\mathcal E_p$ which means that
$x\notin Fix(f_{b,c,d})$.\\
{\it Case $|x|_p\leq1$} In this case, we can easily find $x-d=O[1]$. Noting that fact one has
$$
\frac{bx-c}{x-d}-1=\frac{(b-1)x+d-c}{x-d}=\frac{o[1]x+o[1]}{O[1]}=\frac{o[1]}{O[1]}=o[1].
$$
Again thanks to Lemma \ref{epproperty} we conclude that $x\notin Fix(f_{b,c,d})$.

Hence, we have shown that $Fix(f_{b,c,d})\subset\mathcal E_p$ which completes the proof.
\end{proof}

Now we are going to find all fixed points of $f_{b,c,d}$.

\begin{thm}
Let $p\geq3$ and $f_{b,c,d}$ be a given by \eqref{f_{b,c,d}}. Assume that $d=1-b+c$ and $|b-1|_p<|(c-1)^2|_p<|k^2|_p$.
Then $f_{b,c,d}$ has exactly $\kappa_p+1$ fixed points, where $\kappa_p=(k,p-1)$.
\end{thm}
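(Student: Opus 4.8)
The plan is to reduce the fixed-point equation $f_{b,c,d}(x)=x$ to a polynomial equation of the form treated in Theorem \ref{thm_poly}, together with a separate analysis of the ``point at infinity'' direction. First I would observe that since $\mathrm{Fix}(f_{b,c,d})\subset\mathcal E_p$ by the preceding proposition, every fixed point $x$ satisfies $|x-d|_p=|x-1+(1-d)|_p$; since $d=1-b+c\in\mathcal E_p$ and $x\in\mathcal E_p$, one has $|x-d|_p<1$, so the denominator is small. The equation $f_{b,c,d}(x)=x$ is equivalent to $(bx-c)^k=x(x-d)^k$, i.e. to
\[
P(x):=x(x-d)^k-(bx-c)^k=0,
\]
a polynomial of degree $k+1$. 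I would expand this and divide by the leading coefficient (which is $1-b^k$; here I need $|b^k-1|_p\neq 1$, but $b\in\mathcal E_p$ forces $b^k\in\mathcal E_p$, so $|b^k-1|_p<1$, hence $1-b^k$ is a unit and division is harmless — actually one must be slightly careful: $1-b^k$ is a unit only if $|b^k-1|_p<1$, which holds). Writing the normalized polynomial as $x^{k+1}+\sum_{j=1}^{k+1}A^{(j)}x^{k+1-j}$, I would compute the constant term $A^{(k+1)}$ and check that the hypotheses $a^{(k)}\in\mathbb Z_p^*$, $\max_{1\le j<k+1}|A^{(j)}|_p<p^{-2s}$ of Theorem \ref{thm_poly} hold under $|b-1|_p<|(c-1)^2|_p<|k^2|_p$.

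The computation of the coefficients is the technical heart. Using $d=1-b+c$, one has $d-1=-(b-1)+(c-1) = O[c-1]$ (since $|b-1|_p<|c-1|_p$), and $bx-c = b(x-1)+(b-1)-(c-1)+1$, so on $\mathcal E_p$ the quantity $bx-c$ is close to $1$. I would expand $x(x-d)^k-(bx-c)^k$ in powers of $(x-1)$: the constant term (value at a formal reference, not at $x=1$) and the lower coefficients should all turn out to have norm controlled by $|(c-1)^2|_p$ or better, except the constant term of the relevant monomial equation, which should be a unit congruent to $1$ mod $p$. Concretely, after the substitution $x\mapsto x$ and normalization, the ``$a^{(k)}$'' playing the role in Theorem \ref{thm_poly} should equal $1+o[1]$ in $\mathbb F_p$, so $a^{(k)}_0=1$, and condition (P2) $a^{(k)}=1^{p^s}+o[p^s]$ reduces to $|a^{(k)}-1|_p<p^{-s}=|k|_p/|m|_p$-type bound, which I would verify from $|(c-1)^2|_p<|k^2|_p$. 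This gives $\kappa_p=\mathrm{card}\,Sol_p(x^k+a^{(k)}) = \mathrm{card}\,Sol_p(x^k-1)=(k,p-1)$ by the power-residue criterion recalled before Theorem \ref{FM}, since over $\mathbb F_p$ the equation is $\xi^k=1$.

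That accounts for $\kappa_p$ fixed points lying in $\bigcup_{\xi}B_1(\xi)\subset\mathcal E_p$. The remaining ``$+1$'' comes from the degree drop: $P(x)=x(x-d)^k-(bx-c)^k$ has degree $k+1$ but after dividing by $1-b^k$ we get a genuine degree-$(k+1)$ monic polynomial, so Theorem \ref{thm_poly} applied with exponent $k+1$ would give $(k+1,p-1)$ roots — but the statement claims $\kappa_p+1=(k,p-1)+1$. So the correct bookkeeping is different: I expect that $P$ factors (or nearly factors) so that $k$ of its roots come from the monomial-type equation of degree $k$ contributing $\kappa_p=(k,p-1)$ solutions via Theorem \ref{thm_poly}, and one additional root is isolated directly — likely a root near $x=1$ itself, or coming from a linear factor split off by the structure $d=1-b+c$ (note $P(1) = 1\cdot(1-d)^k-(b-c)^k = (b-c)^k((-1)^k-1)\cdot(\text{unit})$, which vanishes when $k$ is odd; in general one checks whether $x=1$ or some nearby point is forced to be a root by the relation $d=1-b+c$). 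The main obstacle is precisely this: identifying cleanly where the extra fixed point comes from and showing the degree-$k$ part genuinely satisfies the smallness hypotheses of Theorem \ref{thm_poly} after the change of variable that strips off the extra root, so that the count is exactly $(k,p-1)$ and not $(k,p-1)$ plus spurious solutions. Once the polynomial is massaged into the shape $(\text{linear factor})\cdot(\text{Theorem \ref{thm_poly} polynomial of degree }k)$ up to units, the conclusion $\#\mathrm{Fix}(f_{b,c,d}) = \kappa_p+1$ follows by combining Theorem \ref{thm_poly} with the explicit extra root, and all roots lie in $\mathcal E_p$ by the proposition, completing the proof.
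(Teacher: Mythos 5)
There is a genuine gap, and it sits exactly where you flag ``the main obstacle'': your plan never actually produces a polynomial satisfying the hypotheses of Theorem \ref{thm_poly}, and along the route you chose it cannot. Working directly with $P(x)=x(x-d)^k-(bx-c)^k$ (which is already monic of degree $k+1$; the leading coefficient is $1$, not $1-b^k$ --- the $b^k$ only enters in degree $k$), one does have $P(1)=(1-d)^k-(b-c)^k=0$ identically, since $d=1-b+c$ gives $1-d=b-c$ (not only for $k$ odd, as you wrote). But the cofactor $Q(x)=P(x)/(x-1)$ has $x^{k-1}$-coefficient $1-kd-b^k$, whose norm is $|k|_p=p^{-s}$ (because $|1-b^k|_p=|k|_p|b-1|_p<|k|_p$ while $|kd|_p=|k|_p$). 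Theorem \ref{thm_poly} demands that all intermediate coefficients have norm $<p^{-2s}$, and $p^{-s}<p^{-2s}$ never holds for $s\geq0$. So the degree-$k$ factor you hope to feed into Theorem \ref{thm_poly} violates its hypotheses for every $k$, and no amount of further ``massaging'' of $P$ in the variable $x$ fixes this: the large binomial-coefficient terms of $(x-d)^k$ and $(bx-c)^k$ are intrinsic to this form of the equation.

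The missing idea is a change of variable that linearizes the $k$-th powers before forming the polynomial. The paper introduces $g_{b,c,d}(y)=\frac{by^k-c}{y^k-d}$ and shows $card\left(Fix(f_{b,c,d})\right)=card\left(Fix(g_{b,c,d})\right)$ via the mutually inverse correspondences $y\mapsto y^k$ and $x\mapsto\frac{bx-c}{x-d}$. The fixed-point equation of $g_{b,c,d}$ is $y^{k+1}-by^k-dy+c=0$, and using $d=1-b+c$ it factors as $(y-1)\left(y^k-(b-1)\sum_{j=1}^{k-1}y^{k-j}-c\right)=0$; here the intermediate coefficients are all equal to $-(b-1)$, of norm $<|(c-1)^2|_p<|k^2|_p=p^{-2s}$, and the constant term $-c$ is a unit, so Theorem \ref{thm_poly} applies verbatim and yields exactly $\kappa_p=card\,Sol_p(x^k-c)=(k,p-1)$ roots, none of which is $y=1$ (the value of the cofactor at $1$ has norm $|c-1|_p\neq0$). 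Together with the root $y=1$ this gives the count $\kappa_p+1$. Your proposal correctly identifies the target theorem, the role of $Sol_p(x^k-1)$, and the fact that one extra fixed point comes from the factor at $1$, but without the substitution to $g_{b,c,d}$ the reduction fails, so the proof as proposed does not go through.
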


\begin{proof}
We note that condition $d=1-b+c$ provides $Fix(f_{b,c,d})\neq\emptyset$, i.e. $x_{(0)}=1$ is a fixed point of $f_{b,c,d}$.
First we show the set of fixed points of $f_{b,c,d}$ coincides
with the set of fixed points of
$$
g_{b,c,d}(y)=\frac{by^k-c}{y^k-d}.
$$
Indeed, if $y\in Fix(g_{b,c,d})$ then one can check that $y^k\in Fix(f_{b,c,d})$. This means that
\begin{equation}\label{172}
card\left(Fix(g_{b,c,d})\right)\leq card\left(Fix(f_{b,c,d})\right).
\end{equation}
On the other hand, if $x\in Fix(f_{b,c,d})$, then for $z=\frac{bx-c}{x-d}$ we have
$$
g_{b,c,d}(z)=\frac{bz^k-c}{z^k-d}=\frac{bx-c}{x-d}=z.
$$
So, we infer that $z\in Fix(g_{b,c,d})$ which yields
\begin{equation}\label{171}
card\left(Fix(f_{b,c,d})\right)\leq card\left(Fix(g_{b,c,d})\right).
\end{equation}
From \eqref{172} and \eqref{171} we conclude that
$$
card\left(Fix(g_{b,c,d})\right)=card\left(Fix(f_{b,c,d})\right).
$$
It is clear that $y_{(0)}=1$ is a fixed point of $g_{b,c,d}$. Then from
$$
\frac{g_{b,c,d}(y)-1}{y-1}=1
$$
using $d=1-b+c$ we obtain
\begin{equation}\label{eq34}
y^k-c-(b-1)\sum_{j=1}^{k-1}y^{k-j}=0.
\end{equation}
Hence, since $|b-1|_p<|k^2|_p$ and $|c-1|_p<|k|_p$, the polynomial $$P(y)=y^k-c-(b-1)\sum_{j=1}^{k-1}y^{k-j}$$
satisfies all conditions of Theorem \ref{thm_poly}. So, we may conclude that the equation \eqref{eq34}
has exactly $\kappa_p$ solutions: $y_{(1)},y_{(2)},\dots,y_{(\kappa_p)}$. Consequently, $g_{b,c,d}$
has $\kappa_p+1$ fixed points: $y_{(0)}=1$ and $y_{(1)},y_{(2)},\dots,y_{(\kappa_p)}$. This means that
$f_{b,c,d}$ has exactly $\kappa_p+1$ fixed points: $x_{(i)}=y_{(i)}^k$, $i\in\{0,1,2,\dots,\kappa_p\}$.
This completes the proof.
\end{proof}

We notice that the obtained result allows to investigate dynamical behavior of the function $f_{b,c,d}$, for certain particular values of the parameters, chaoticity of such type of function has been investigated in \cite{ALS,MFKhO3}.

\section*{Acknowledgments}
The present work is supported by the UAEU "Start-Up" Grant, No.
31S259. The authors are grateful to anonymous referees whose useful remarks allowed to improve the presentation of the present paper.


\begin{thebibliography}{99}

\bibitem{ALB} Albeverio S., Cianci R., Khrennikov A.Yu., $p$-adic valued quantization, \textit{$p$-Adic Numbers,
Ultrametric Analysis, and Applications},  {\bf 1} (2009), 91--104.

\bibitem{ALS} Ahmad M.A.Kh., Liao L.M., Saburov M., Periodic $p$-adic Gibbs measures of $q$-state Potts model on Cayley
tree: the chaos implies the vastness of $p$-adic Gibbs measures, \textit{J. Stat. Phys.} {\bf 71} (2018), 1000--1034.

\bibitem{AK}  Anashin V., Khrennikov A., \textit{Applied algebraic dynamics}, Walter de Gruyter, Berlin, New
York, 2009.

\bibitem{AKY}  Anashin V.S., Khrennikov A.Yu.,  Yurova E.I., Characterization of
ergodicity of $p$-adic dynamical systems by using the van der Put basis, \textit{Dokl. Math.} {\bf 83} (2011),
306--308.

\bibitem{Borevich} Borevich Z.I., Shafarevich I.R., \textit{Number Theory},  Academic Press, New York, 1966.


\bibitem{COR} Casas J.M., Omirov B.A., Rozikov U.A., Solvability criteria for the equation $x^q = a$ in the field of $p$-adic numbers,
{\it Bull. Malays. Math. Sci. Soc.}, {\bf 37}(2014), 853-864.

\bibitem{Drag} Dragovich B., Khrennikov A.Yu., Kozyrev S.V., Volovich
I.V., On $p$-adic mathematical physics, \textit{p-Adic Numbers,
Ultrametric Analysis and Appl.} {\bf 1} (2009),  1–--17.

\bibitem{Drag2} Dragovich B., Khrennikov A.Yu., Kozyrev S.V., Volovich I.V., Zelenov E. I., $p$ -Adic
Mathematical Physics: The First 30 Years. \textit{$p$-Adic Numbers Ultrametric Anal. Appl.} {\bf 9} (2017), 87--121.

\bibitem{Khr} Khrennikov A.,  \textit{Non-Archimedean analysis and its applications.} Nauka, Fizmatlit, Moscow,
2003 (in Russian).

\bibitem{KKZ}  Khrennikov A.Yu., Kozyrev S.V.,  Zuniga-Galindo W.A., \textit{Ultrametric Pseudodifferential Equations
and Applications}, Cambridge Univ. Press, 2018.

\bibitem{Ko} Koblitz N.  \textit{$p$-adic numbers, $p$-adic analysis and
zeta-function}, Berlin, Springer, 1977.

\bibitem{M09}  Mukhamedov F., On existence of generalized Gibbs measures for one dimensional $p$-adic countable state Potts
model, \textit{Proc. Steklov Inst. Math.} {\bf 265} (2009), 165--176.

\bibitem{ReportMath} Mukhamedov F., A dynamical system approach to phase transitions for
$p$-adic Potts model on the Cayley tree of order two, {\it Rep. Math. Phys.} {\bf 70}(2012), 385--406.

\bibitem{MOSM13} Mukhamedov F., Omirov B., Saburov M.,  Masutova K.,  On solvability of cubic equations in the set of
$p$-adic integers ($p > 3$) \textit{Siberian Math. J.} {\bf 54}(2013), 501--516.

\bibitem{M14} Mukhamedov F., Recurrence equations over trees in a non-Archimedean context, \textit{P-adic Numb. Ultra. Anal.
Appl.} {\bf 6}(2014), 310--317.

\bibitem{MR1} Mukhamedov F.M., Rozikov U.A., On Gibbs measures of $p$-adic Potts model on a Cayley tree, \textit{Indag. Math.
N.S.} {\bf 15}(2004), 85--100.

\bibitem{FMMS_JNT} Mukhamedov F., Saburov M., On equation $x^q=a$ over $\mathbb Q_p$, {\it J. Number Theory},
{\bf 133} (2013), 55--58.

\bibitem{MFKhO3}  Mukhamedov F., Khakimov O., Chaotic behaviour of the
$p$-adic Potts-Bethe
mapping, \textit{Disc.  Cont. Dyn. Syst.} {\bf 38}(2018), 231--245.

\bibitem{Rosen} Rosen K.H., \textit{Elementary Number Theory and Its Applications}, Pearson, 2011.

\bibitem{sch} Schikhof W. H., \textit{Ultrametric calculus. An introduction to $p$-adic
analysis.} Cambridge: Cambridge University Press 1984.

\bibitem{VVZ} Vladimirov, V.S., Volovich, I.V., Zelenov, E.I. \textit{p-adic Analysis and Mathematical Physics},
World Scientific, Singapore, 1994.

\bibitem{YurKhren} Yurova E., Khrennikov A., Generalization of Hensel's lemma: finding the root
of $p$-adic Lipschitz functions, {\it  J. Number Theory} {\bf158} (2016), 217--233.

\bibitem{YurKhren1} Yurova Axelsson E.,  Khrennikov A. Subcoordinate representation of $ p$-adic functions and generalization of Hensel's lemma, \textit{Izv. Math.}  {\bf 82} (2018), 632--645.

\end{thebibliography}
\end{document}